 \newtheorem{thm}{Theorem}[section]
  \newtheorem{fact}[thm]{Fact}
 \newtheorem{lem}[thm]{Lemma}
 \newtheorem{prop}[thm]{Proposition}
 \theoremstyle{definition}
 \newtheorem{defn}[thm]{Definition}
 \theoremstyle{remark}
 \numberwithin{equation}{section}
\begin{document}

\title[Chain recurrent set]
 {Topological Entropy, Entropy points and shadowing}

\author{S.A. Ahmadi}

\address{Department of Mathematics, University of Sistan and Baluchestan, Zahedan, Iran}

\email{sa.ahmadi@math.usb.ac.ir, sa.ahmdi@gmail.com}
\subjclass[2010]{Primary 54H20 }
\keywords{Topological entropy, Uniform spaces, Topological shadowing, Entropy point}




\begin{abstract}
In this note we study some properties of topological entropy for non-compact non-metrizable spaces. We prove that if a uniformly continuous self-map $f$ of a uniform space has topological shadowing property then the map $f$ has positive uniform entropy, which extends the similar known result for homeomorphisms on compact metric spaces having shadowing property. 

\end{abstract}
\maketitle

The notion of entropy, as a measure of information content, was first introduced in 1948 by Shannon. The roots of this issue can be traced back to statistical mechanics, which is originated in the work of Boltzmann who studied the relation between entropy and probability in physical systems in 1870's. Entropy has also generalized around 1932 to quantum mechanics by von Neumann.
Topological entropy is a non-negative real number that measures the complexity of systems on topological spaces and it is the greatest type of entropy of a system.
A dynamical system is called deterministic if its topological entropy vanishes. One may argue that the future of a deterministic dynamical system can be predicted if its past is known. In a similar way positive entropy maybe related to randomness and chaos.\\
Topological entropy is introduced in 1965 by Adler, Konheim and McAndrew \cite{1}, and subsequently studied by many researchers, see for instance. For a system given by an iterated function, the topological entropy represents the exponential growth rate of the number of distinguishable orbits of the iterates\cite{2}\cite{3}\cite{5}.
\section{Introduction and terminologies}

\subsection*{Uniform Spaces}
A \textit{uniform space} is 
a set with a uniform structure defined on it. A uniform structure $\mathcal{U}$ on a space $X$ is defined by the specification of a system of subsets of the product $X\times X$. The family $\mathcal{U}$  must satisfy the following axioms:
\begin{itemize}
\item[U1)]
for any $E_1,E_2$ the intersection $E_1\cap
E_2$ is also contained in $\mathcal{U}$, and if $E_1\subset E_2$ and $E_1\in\mathcal{U}$, then
$E_2\in\mathcal{U}$),
\item[U2)]
every set $E\in\mathcal{U}$ contains the diagonal $\Delta_X =
\{(x,x)\;|\; x\in X\}$;
\item[U3)]
 if $E\in\mathcal{U}$, then $E^{T} = \{(y,x)\;|\;(x,y)\in E\} \in\mathcal{U}$;
\item[U4)]
for any $E\in\mathcal{U}$ there is a $\widehat{E}\in\mathcal{U}$ such that $\widehat{E}\circ \widehat{E}
\subset E$, where $\widehat{E}\circ \widehat{E} =\{(x,y)\;|\; \textrm{ there is a } z\in
X\textrm { with }(x,z)\in \widehat{E}, (z,y)\in \widehat{E}\}$.
\end{itemize}
The elements of $\mathcal{U}$ are called entourages of the uniformity defined
by $\mathcal{U}$. If $(X,\mathcal{U})$ is a uniform space, then the uniform topology on $X$ is the topology in which a neighborhood base at a point $x\in X$ is formed by the family of sets $E[x]$ where $E$ runs through the entourages of $X$ and $E[x]=\{y\in X|(x,y)\in E\}$ is called the cross section of $E$ at $x\in X$.\\
\subsection*{Topological Entropy}
By a cover $\alpha$ we mean a family of open sets whose union is $X$. A cover $\beta$ is a refinement of another cover $\alpha$, which we write as $\beta\succcurlyeq\alpha$, provided that any element of $\beta$ is contained in an element of $\alpha$. Unlike for partitions, it no longer holds that each element of $\alpha$ is then a union of some elements of $\beta$. A join of two covers $\alpha\vee\beta$ is defined by
$\alpha\vee\beta=\{U\cap V:~U\in\alpha,~V\in\beta\}$. 
Clearly,  $\alpha\vee\beta\succcurlyeq\alpha$ and $\alpha\vee\beta\succcurlyeq\beta$. A sub-cover of a cover U is any subfamily $\beta\\subset\alpha$ which is also a cover. For a cover $\alpha$ we let $N(\alpha)$ denote the minimal cardinality of a sub-cover. A sub-cover of this cardinality will be referred to as \textit{optimal}. Let $f:X\rightarrow X$ be a continuous transformation.,If $\alpha$ is an open cover, then $f^{-1}(\alpha)=\{f^{-1}(U):~U\in\alpha\}$ is also a cover and $N(f^{-1}(\alpha))\leq N(\alpha)$. We will denote
$\alpha\vee f^{-1}(\alpha)\vee\dots\vee f^{-(n-1)}(\alpha)$ by $\bigvee_{i=0}^{n-1}f^{-i}(\alpha)$.\\
\begin{fact}
If $\alpha$ is an open cover of $X$ and $f:X\rightarrow X$ is a continuous map then the limit $\lim_{n\rightarrow\infty}\frac{1}{n}\log N(\bigvee_{i=0}^{n-1}f^{-i}(\alpha))$ exists.
\end{fact}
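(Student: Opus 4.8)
The plan is to reduce the statement to the classical subadditive lemma of Fekete. Write $a_n = \log N\!\left(\bigvee_{i=0}^{n-1} f^{-i}(\alpha)\right)$. Since $X$ is nonempty every cover admits a subcover, so $N(\cdot) \geq 1$ and hence $a_n \geq 0$ for all $n$. It therefore suffices to verify that the sequence $(a_n)$ is subadditive, i.e. $a_{m+n} \leq a_m + a_n$ for all $m,n \geq 1$; Fekete's lemma then guarantees that $\lim_{n\to\infty} a_n/n$ exists and in fact equals $\inf_n a_n/n$.

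First I would record two elementary properties of the invariant $N$. The first is submultiplicativity under joins: if $\{U_1,\dots,U_p\}$ and $\{V_1,\dots,V_q\}$ are optimal subcovers of covers $\beta$ and $\gamma$, then $\{U_i \cap V_j : 1\leq i\leq p,\ 1\leq j\leq q\}$ is a subcover of $\beta \vee \gamma$, whence $N(\beta \vee \gamma) \leq N(\beta)\,N(\gamma)$. The second is monotonicity under preimages: because $f^m$ is continuous, $f^{-m}(\beta)$ is again an open cover, and the image under $f^{-m}$ of an optimal subcover of $\beta$ is a subcover of $f^{-m}(\beta)$, so $N(f^{-m}(\beta)) \leq N(\beta)$. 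For $m=1$ this is exactly the inequality already noted in the text.

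Next I would combine these. Using that taking preimages distributes over joins, $f^{-m}\!\left(\bigvee_{j=0}^{n-1} f^{-j}(\alpha)\right) = \bigvee_{j=0}^{n-1} f^{-(m+j)}(\alpha) = \bigvee_{i=m}^{m+n-1} f^{-i}(\alpha)$, and therefore
\[
\bigvee_{i=0}^{m+n-1} f^{-i}(\alpha) = \left(\bigvee_{i=0}^{m-1} f^{-i}(\alpha)\right) \vee f^{-m}\!\left(\bigvee_{j=0}^{n-1} f^{-j}(\alpha)\right).
\]
Applying submultiplicativity and then monotonicity,
\[
N\!\left(\bigvee_{i=0}^{m+n-1} f^{-i}(\alpha)\right) \leq N\!\left(\bigvee_{i=0}^{m-1} f^{-i}(\alpha)\right) \cdot N\!\left(\bigvee_{j=0}^{n-1} f^{-j}(\alpha)\right),
\]
and taking logarithms yields exactly $a_{m+n} \leq a_m + a_n$.

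Finally, the existence of the limit follows from Fekete's subadditive lemma applied to the nonnegative subadditive sequence $(a_n)$. I do not expect any serious obstacle: the argument is entirely formal once the two properties of $N$ are in place. The only points requiring a little care are the distributivity of $f^{-m}$ over joins and the monotonicity $N(f^{-m}(\beta)) \leq N(\beta)$ for all $m$ (rather than merely $m=1$), both of which follow from the continuity of the iterates $f^m$ together with the set-theoretic identity $f^{-m}(U \cap V) = f^{-m}(U) \cap f^{-m}(V)$.
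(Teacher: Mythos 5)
The paper states this Fact without proof---it is the classical Adler--Konheim--McAndrew lemma, invoked by citation rather than argument---so there is no in-paper proof to compare against. Your argument is correct and is the standard one: submultiplicativity of $N$ under joins, monotonicity of $N$ under preimages, the decomposition $\bigvee_{i=0}^{m+n-1} f^{-i}(\alpha) = \bigl(\bigvee_{i=0}^{m-1} f^{-i}(\alpha)\bigr) \vee f^{-m}\bigl(\bigvee_{j=0}^{n-1} f^{-j}(\alpha)\bigr)$, and Fekete's subadditive lemma; all the individual steps you cite (distributivity of preimage over intersection, the subcover counting) are sound. One caveat is worth recording, since this paper explicitly aims at non-compact spaces and the Fact is stated with no compactness hypothesis: without compactness, $N(\cdot)$ may be an infinite cardinal, so $a_n$ need not be a real number and Fekete's lemma does not literally apply. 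The repair is immediate from the two properties you already established: if $N(\alpha)=\infty$, then every join refines $\alpha$ and hence every term is $+\infty$, so the limit exists (as $+\infty$); if $N(\alpha)<\infty$, then submultiplicativity and monotonicity give $a_n \leq n\,a_1 < \infty$, the sequence is genuinely real-valued and nonnegative, and Fekete yields a finite limit. Adding this one sentence would make your proof valid in the full generality in which the Fact is stated.
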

Then the entropy of $f$ with respect to $\alpha$ is defined to be
$$h(f,\alpha)=\lim_{n\rightarrow\infty}\frac{1}{n}\log N(\bigvee_{i=0}^{n-1}f^{-i}(\alpha))$$
 and the \textit{topological entropy} of $f$ is given by 
$$h_{\textsf{top}}(f)=\sup\{h(f,\alpha):~\alpha \mbox{ is an open cover of }X\}.$$
\subsection*{Metric Entropy}
Let $(X,d)$ be a compact metric space and $f : X \rightarrow X$ be a homeomorphism. Let $d_n(x, y) =\max_{0\leq i\leq n-1}d(f^i(x), f^i(y))$ for all $n\in\mathbb{N}$. Each $d_n$ is a metric on $X$ and the $d_n$'s are all equivalent metrics in the sense that they induce the same topology on $X$.
Fix $\epsilon> 0$ and let $n\in\mathbb{N}$. A set $A$ in $X$ is \textit{$(n,\epsilon)$-spanning} if for every point $x\in X$ there exists a point $y\in A$ such that $d_n(x, y) < \epsilon$. By compactness, there are finite $(n,\epsilon)$-spanning sets. Let $r_n(\epsilon,f)$ be the minimum cardinality of the $(n,\epsilon )$-spanning sets.
A set $A\subset X$ is \textit{$(n,\epsilon)$-separated} if the $d_n$-distance between any two distinct points in $A$ is at least $\epsilon$. Let $s_n(\epsilon,f)$ be the maximum cardinality of $(n,\epsilon)$-separated sets. Then
\[h_{top}(f)=\lim_{\epsilon\rightarrow 0}\lim_{n\rightarrow \infty}\frac{1}{n}\log s_n(\epsilon,f)=\lim_{\epsilon\rightarrow 0}\lim_{n\rightarrow \infty}\frac{1}{n}\log r_n(\epsilon,f)\]
is called the \textit{metric entropy} of $f$ \cite{4}.
\subsection*{Uniform Covering Entropy}
 Let $X,\mathcal{U}$ be a uniform space and let $f$ be a uniformly continuous self-map. For each $U\in\mathcal{U}$, we define 
 $$\textsf{cov}(n,U,f)=N(\bigvee_{i=0}^{n-1}f^{-1}(\mathcal{C}(U))),$$
where $\mathcal{C}(U)=\{U[x]:~x\in X\}$ is called a \textit{uniform cover}.
\begin{fact}
If $U$ is an entourage in $\mathcal{U}$ and $f:X\rightarrow X$ is a continuous map then the limit $\lim_{n\rightarrow\infty}\frac{1}{n}\log \textsf{cov}(n,U,f)$ exists.
\end{fact}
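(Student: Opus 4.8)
The plan is to reduce this to the subadditivity argument already underlying Fact 1.1, noting that $\mathcal{C}(U)=\{U[x]:x\in X\}$ is a cover of $X$ (each $x$ lies in $U[x]$ since $U\supseteq\Delta_X$ by axiom U2), and that the quantity $N(\cdot)$ together with all the properties of it invoked for open covers depends only on the cover structure, not on openness. So I would set
$$a_n=\log N\!\left(\bigvee_{i=0}^{n-1}f^{-i}(\mathcal{C}(U))\right),$$
a sequence with values in $[0,\infty]$, prove that $(a_n)$ is subadditive, i.e.\ $a_{m+n}\le a_m+a_n$ for all $m,n\ge 1$, and then invoke Fekete's subadditive lemma to conclude that $\lim_{n}a_n/n$ exists and equals $\inf_n a_n/n$.

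First I would record the two elementary estimates for $N$. For any covers $\alpha,\beta$ one has $N(\alpha\vee\beta)\le N(\alpha)N(\beta)$: choosing optimal subcovers $\{A_1,\dots,A_p\}$ of $\alpha$ and $\{B_1,\dots,B_q\}$ of $\beta$, the family $\{A_i\cap B_j\}$ is a subcover of $\alpha\vee\beta$ with at most $pq$ members. Next, $N(f^{-1}(\alpha))\le N(\alpha)$, as already observed in the text: the preimages of an optimal subcover $\{A_1,\dots,A_p\}$ of $\alpha$ cover $X$, because $\bigcup_i f^{-1}(A_i)=f^{-1}(X)=X$, and hence form a subcover of $f^{-1}(\alpha)$. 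Iterating the second estimate gives $N(f^{-m}(\gamma))\le N(\gamma)$ for every $m\ge 1$.

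Next I would verify the join identity. Since taking preimages commutes with finite intersections, $f^{-m}$ distributes over joins, $f^{-m}(\alpha\vee\beta)=f^{-m}(\alpha)\vee f^{-m}(\beta)$, and $f^{-m}(f^{-i}(\mathcal{C}(U)))=f^{-(m+i)}(\mathcal{C}(U))$. Consequently
$$\bigvee_{i=0}^{m+n-1}f^{-i}(\mathcal{C}(U))=\left(\bigvee_{i=0}^{m-1}f^{-i}(\mathcal{C}(U))\right)\vee f^{-m}\!\left(\bigvee_{i=0}^{n-1}f^{-i}(\mathcal{C}(U))\right).$$
Applying $N(\alpha\vee\beta)\le N(\alpha)N(\beta)$ and then $N(f^{-m}(\gamma))\le N(\gamma)$, and taking logarithms, yields $a_{m+n}\le a_m+a_n$, which is the required subadditivity.

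Finally, by Fekete's lemma the limit $\lim_n a_n/n=\inf_n a_n/n$ exists in $[0,\infty]$. The only point requiring care is that $X$ is non-compact, so $N(\mathcal{C}(U))$, and hence each $a_n$, may be infinite; the subadditive argument still goes through in the extended reals, the limit being $+\infty$ when every $a_n$ is infinite and otherwise recovered from the usual division estimate $a_n\le a_{\lfloor n/m\rfloor m}+a_{n\bmod m}$ once some $a_{m}<\infty$. I expect this finiteness bookkeeping, rather than any genuinely hard step, to be the only subtlety, the remainder being the standard Adler--Konheim--McAndrew computation transported verbatim from open covers to the uniform cover $\mathcal{C}(U)$.
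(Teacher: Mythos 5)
The paper never proves this Fact at all --- it is stated as background, with the argument left to the references --- so the only comparison available is with the standard argument the paper implicitly relies on, and yours is exactly that argument: the Adler--Konheim--McAndrew subadditivity-plus-Fekete scheme transported to the uniform cover $\mathcal{C}(U)$. Your two counting estimates, the join identity, and the observation that openness of the cover elements plays no role are all correct; note in passing that continuity of $f$ is likewise never used, since preimages of covers are covers for an arbitrary map.

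The one place where your write-up is not quite right as stated is the finiteness bookkeeping. For an abstract subadditive sequence $(a_n)$ with values in $[0,\infty]$, the existence of some $a_m<\infty$ does \emph{not} suffice for Fekete's conclusion: take $a_n=\infty$ for $n$ odd and $a_n=n$ for $n$ even; this sequence is subadditive, yet $a_n/n$ oscillates between values near $1$ and $+\infty$, so the limit fails to exist, and your division estimate $a_n\le a_{\lfloor n/m\rfloor m}+a_{n\bmod m}$ is vacuous whenever $a_{n\bmod m}=\infty$. What rescues the argument here is a property of your particular sequence that you did not invoke: $a_n$ is non-decreasing, because $\bigvee_{i=0}^{n}f^{-i}(\mathcal{C}(U))$ refines $\bigvee_{i=0}^{n-1}f^{-i}(\mathcal{C}(U))$ and $N$ is monotone under refinement. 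Hence either $a_1=\infty$, in which case every $a_n=\infty$ and the limit is $+\infty$, or $a_1<\infty$, in which case subadditivity gives $a_n\le na_1<\infty$ for all $n$ and the classical Fekete lemma applies verbatim. With that one-line correction your proof is complete.
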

This gives the notion of \textit{uniform covering entropy}:
$$h_{\textsf{uc}(f)}=\sup\{h_{\textsf{uc}}(f,K):~K \in\mathcal{K}(X)\}$$
where $h_{\textsf{uc}}(f,K)=\sup\{\textsf{cov}(U,K,f):~U\in\mathcal{U}\}$.

\subsection*{Uniform Entropy}
In continue we give the definition of topological entropy using separating and spanning sets in uniform space $(X,\mathcal{U})$. Given an entourage $E$ and natural number $n$, a subset $A\subseteq X$ is called an $(n,E,f)$-spanning set provided that
$$X\subseteq \bigcup_{x\in A}\left(  \bigcap_{i=0}^{n-1}F^{-i}(E)\right) [x],$$
Or equivalently if for every $x\in X$ there exists $y\in A$ such that $(f^i(x),f^i(y))\in E$ for all $i=0,1,2,\dots,n-1$. By compactness, there are finite $(n,E,f)$-spanning sets. Let $\textsf{span}(n,E,f)$ be the minimum cardinality of an $(n,E,f)$-spanning set.\\

A subset $A\subseteq X$ is called an $(n,E,f)$-separated set provided that
$$A\times A\cap\Delta^c\subset\bigcup _{i=0}^{n-1}F^{-i}(E^c),$$
Or equivalently if for each pair of distinct points $x$ and $y$ in $A$ there exists $0\leq i\leq n-1$ such that $(f^i(x),f^i(y))\notin E$.Again by compactness any $(n,E,f)$-separated set is finite. Let $\textsf{sep}(n,E,f)$ be the maximum cardinality of an $(n,E,f)$-separated set.
\begin{fact}
Let	$f$	be	a	uniformly	continuous	self-map	on	the	uniform	space	$( X , \mathcal{U}	)$.	If $U$ and $V$ be two entourages in $\mathcal{U}$ with $U\subseteq V$, then
\begin{enumerate}
\item
$\textsf{sep}(n,V,f)\leq \textsf{sep}(n,U,f)$ for all $n\geq 0$
\item
$\textsf{span}(n,V,f)\leq \textsf{span}(n,U,f)$ for all $n\geq 0$
\item
$\textsf{span}(n,V,f)\leq \textsf{sep}(n,U,f)$ for all $n\geq 0$
\end{enumerate}
\end{fact}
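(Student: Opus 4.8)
The plan is to derive all three inequalities directly from the set-theoretic definitions of spanning and separated sets, using only the hypothesis $U \subseteq V$ (equivalently $V^c \subseteq U^c$). Parts (1) and (2) are pure monotonicity statements and should require nothing more than unwinding the definitions; the only genuine content lies in (3), where I would run the classical argument that a maximal separated set is automatically spanning.

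For (1), let $A$ be any $(n,V,f)$-separated set. For each pair of distinct points $x,y\in A$ there is an index $i\in\{0,\dots,n-1\}$ with $(f^i(x),f^i(y))\notin V$; since $U\subseteq V$ gives $V^c\subseteq U^c$, the same $i$ yields $(f^i(x),f^i(y))\notin U$. Hence $A$ is also $(n,U,f)$-separated. Thus every competitor in the definition of $\textsf{sep}(n,V,f)$ competes for $\textsf{sep}(n,U,f)$ as well, and taking maxima gives $\textsf{sep}(n,V,f)\leq\textsf{sep}(n,U,f)$. Part (2) is the dual statement: if $A$ is $(n,U,f)$-spanning, then for each $x$ there is $y\in A$ with $(f^i(x),f^i(y))\in U\subseteq V$ for all $i$, so $A$ is $(n,V,f)$-spanning; since the family of $(n,U,f)$-spanning sets is contained in the family of $(n,V,f)$-spanning sets, passing to minimal cardinalities yields $\textsf{span}(n,V,f)\leq\textsf{span}(n,U,f)$.

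For (3), I would choose a maximal $(n,U,f)$-separated set $A$, which exists because all such sets are finite with cardinalities bounded by $\textsf{sep}(n,U,f)$, so that $|A|=\textsf{sep}(n,U,f)$. The claim is that $A$ is $(n,V,f)$-spanning. For $x\in A$ the spanning condition is trivial via the diagonal, while for $x\notin A$ maximality forces $A\cup\{x\}$ to violate the separation condition; since $A$ itself is separated, the violation must involve $x$, producing a point $y\in A$ with $(f^i(x),f^i(y))\in U\subseteq V$ for every $i$. Hence $X$ is covered by the $V$-sections around the points of $A$, so $A$ is $(n,V,f)$-spanning and $\textsf{span}(n,V,f)\leq|A|=\textsf{sep}(n,U,f)$.

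The main obstacle I anticipate is the possible asymmetry of the entourages: the separation condition is stated on ordered pairs $(f^i(x),f^i(y))$, whereas the spanning condition required in (3) reads $(f^i(x),f^i(y))\in V$, and the pair extracted from the failure of separation could a priori occur in the reversed order $(f^i(y),f^i(x))$. I would resolve this either by restricting attention to symmetric entourages, which form a base for $\mathcal{U}$ by axiom U3 and over which the relevant suprema are unchanged, or by carrying $V^T$ explicitly through the argument and invoking $V^T\in\mathcal{U}$. Everything else—existence of the maximal separated set and the two monotonicity inferences—is routine.
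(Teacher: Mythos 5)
The paper states this Fact without proof (it is one of the unproven ``Facts'' in the preliminaries), so your argument has to stand on its own. Parts (1) and (2) are exactly the routine monotonicity inferences you give, and they are correct. For part (3) your reduction to the classical ``a separated set of maximal cardinality is spanning'' argument is the standard route, and the finiteness point you gloss over is harmless: if $\textsf{sep}(n,U,f)=\infty$ the inequality is vacuous, and otherwise a separated set of maximum cardinality exists and is maximal under inclusion, which is all you use.

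The asymmetry you flag in (3), however, is not a removable technicality: the statement as printed is actually \emph{false} for non-symmetric entourages, so no bookkeeping with $V^T$ can rescue it. Take $X=\{a,b,c\}$ with the discrete uniformity (every reflexive relation is an entourage), $f=\mathrm{id}$, $n=1$, and $U=V=\Delta\cup\{(a,b),(b,c),(c,a)\}$. Every two-point subset of $X$ contains an ordered pair lying in $U$, so $\textsf{sep}(1,U,f)=1$; but no singleton $\{y\}$ satisfies $(x,y)\in V$ for all $x$ (nor $(y,x)\in V$ for all $x$, on the other reading of the definition), so $\textsf{span}(1,V,f)=2>\textsf{sep}(1,U,f)$ --- and this even in a compact space. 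Accordingly, your second proposed fix can only deliver $\textsf{span}(n,V\cup V^T,f)\le\textsf{sep}(n,U,f)$, which is strictly weaker than (3), since $V\subseteq V\cup V^T$ makes the left-hand side smaller by part (2), not larger. Your first fix is the correct one and should be promoted from an option to the proof itself: prove (3) under the additional hypothesis that $U$ (or $V$) is symmetric, where the reversed pair $(f^i(y),f^i(x))\in U$ coming from maximality does give $(f^i(x),f^i(y))\in U^T=U\subseteq V$. This loses nothing for the entropy theory: by U1 and U3 every entourage $E$ contains the symmetric entourage $E\cap E^T$, so by the monotonicity in (1) and (2) the suprema defining $h_{\textsf{sep}}(f)$ and $h_{\textsf{span}}(f)$ are unchanged when restricted to symmetric entourages, which is all the subsequent Fact ($h_{\textsf{sep}}(f)=h_{\textsf{span}}(f)$) requires. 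This reading is also the only one consistent with the paper's own definitions, whose two formulations of ``spanning'' (the set-theoretic one, which puts the point of $A$ in the first coordinate, and the verbal one, which puts it in the second) agree only for symmetric entourages.
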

Given $U\in\mathcal{U}$, define
\begin{align*}
&\textsf{span}(U,f)=\limsup_{n\rightarrow\infty}1/n\log \textsf{span}(n,U,f);\\
&\textsf{sep}(U,f)=\limsup_{n\rightarrow\infty}1/n\log \textsf{sep}(n,U,f).
\end{align*}
And we define the following quantities for uniformly continuous map $f$:
\begin{align*}
&h_{\textsf{span}}(f)=\sup\{\textsf{span}(U,f):~U\in\mathcal{U}\};\\
&h_{\textsf{sep}}(f)=\sup\{\textsf{sep}(U,f):~U\in\mathcal{U}\}.
\end{align*}

\begin{fact}
Let	$f$	be	a	uniformly	continuous	self-map	on	the	uniform	space	$( X , \mathcal{U}	)$. Then $h_{\textsf{span}}(f)=h_{\textsf{sep}}(f)$.
\end{fact}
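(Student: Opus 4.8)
The plan is to establish the two inequalities $h_{\textsf{span}}(f) \le h_{\textsf{sep}}(f)$ and $h_{\textsf{sep}}(f) \le h_{\textsf{span}}(f)$ separately, using the monotonicity relations of the previous Fact for the easy direction and supplying a composition-of-entourages argument for the other.

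For the inequality $h_{\textsf{span}}(f) \le h_{\textsf{sep}}(f)$, I would first observe that part (3) of the previous Fact applies with $V=U$, since $U\subseteq U$ trivially, yielding $\textsf{span}(n,U,f)\le\textsf{sep}(n,U,f)$ for every $n$ and every entourage $U$. Applying $\frac{1}{n}\log(\cdot)$, passing to the $\limsup$, and then taking the supremum over all $U\in\mathcal{U}$ gives $\textsf{span}(U,f)\le\textsf{sep}(U,f)$ and hence $h_{\textsf{span}}(f)\le h_{\textsf{sep}}(f)$ at once.

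The substance of the proof lies in the reverse inequality, where I would show that for each entourage $U$ one can find a smaller entourage $W$ with $\textsf{sep}(n,U,f)\le\textsf{span}(n,W,f)$. Given $U$, I would use axiom (U4) to pick $\widehat{U}$ with $\widehat{U}\circ\widehat{U}\subseteq U$, and then pass to the symmetric entourage $W=\widehat{U}\cap\widehat{U}^{T}$, which still lies in $\mathcal{U}$ by (U3) and (U1) and still satisfies $W\circ W\subseteq U$. The key claim is that every $(n,W,f)$-spanning set has at least as many points as any $(n,U,f)$-separated set: given such a separated set $A$ and a spanning set $B$, assign to each $a\in A$ a point $b(a)\in B$ with $(f^i(a),f^i(b(a)))\in W$ for all $0\le i\le n-1$; if two distinct points $a_1,a_2\in A$ received the same image $b$, then the symmetry of $W$ together with $W\circ W\subseteq U$ would force $(f^i(a_1),f^i(a_2))\in U$ for \emph{every} such $i$, contradicting that $A$ is $(n,U,f)$-separated. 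Hence $a\mapsto b(a)$ is injective and $\textsf{sep}(n,U,f)\le\textsf{span}(n,W,f)$.

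Passing to the $\limsup$ then gives $\textsf{sep}(U,f)\le\textsf{span}(W,f)\le h_{\textsf{span}}(f)$, and taking the supremum over $U\in\mathcal{U}$ yields $h_{\textsf{sep}}(f)\le h_{\textsf{span}}(f)$. Combining the two inequalities closes the proof. I expect the main obstacle to be the verification of the injectivity claim, in particular carrying out the symmetrization of the half-entourage correctly so that the relation $W\circ W\subseteq U$ can be invoked simultaneously at each index $i$; everything else reduces to routine manipulation of logarithms, $\limsup$'s, and suprema.
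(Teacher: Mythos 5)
The paper states this Fact without proof --- like the other numbered Facts in the introduction, it is quoted as a known result --- so there is no in-paper argument to compare yours against; I can only judge your proposal on its own merits, and it is correct. The easy direction is exactly as you say: part (3) of the preceding Fact with $V=U$ gives $\textsf{span}(n,U,f)\le\textsf{sep}(n,U,f)$, and the passage through $\frac{1}{n}\log$, $\limsup$, and suprema is routine. For the reverse direction your construction is the standard metric-space argument (compare $s_n(\epsilon,f)\le r_n(\epsilon/2,f)$) transported to uniform spaces, and the one point where care is needed is handled properly: you symmetrize, taking $W=\widehat{U}\cap\widehat{U}^{T}$ with $W\circ W\subseteq U$, so that when two distinct separated points $a_1,a_2$ are assigned the same spanning point $b$, the relations $(f^i(a_1),f^i(b))\in W$ and $(f^i(a_2),f^i(b))\in W$ can be reversed and composed to give $(f^i(a_1),f^i(a_2))\in W\circ W\subseteq U$ for every $0\le i\le n-1$, contradicting separation. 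The symmetrization is genuinely necessary, since the paper's definition of a spanning set places the spanning point in the first coordinate of the entourage while separation is phrased on unordered pairs; with a symmetric $W$ this mismatch is harmless. Note also that your argument nowhere uses compactness: if $\textsf{span}(n,W,f)$ is infinite the inequality $\textsf{sep}(n,U,f)\le\textsf{span}(n,W,f)$ is vacuous, and otherwise the injection bounds any separated set by a finite spanning set, so the proof is valid at the full stated generality of the Fact.
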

This gives the notion of \textit{uniform entropy}:
 $$h_{\textsf{u}}(f)=h_{\textsf{span}}(f)=h_{\textsf{sep}}(f)$$
\begin{fact}
 Let $(X,\mathcal{U})$ be a uniform space. If $f:X\rightarrow X$ is a uniformly continuous map, then $h_{\textsf{uc}}(f)=h_{\textsf{u}}(f)$.
\end{fact}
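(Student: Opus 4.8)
The plan is to establish the two inequalities $h_{\textsf{u}}(f)\le h_{\textsf{uc}}(f)$ and $h_{\textsf{uc}}(f)\le h_{\textsf{u}}(f)$ separately. In each case I would first compare, for a fixed $n$, the covering number $\textsf{cov}(n,U,f)=N(\bigvee_{i=0}^{n-1}f^{-i}(\mathcal C(U)))$ with a spanning or separating number at a suitably adjusted entourage, and only then pass to $\lim_{n}\tfrac1n\log(\cdot)$ and to the supremum over all entourages; I write $\textsf{cov}(U,f)=\lim_{n}\tfrac1n\log\textsf{cov}(n,U,f)$ (the limit existing by the Fact on uniform covering numbers) and $h_{\textsf{uc}}(f)=\sup_{U\in\mathcal U}\textsf{cov}(U,f)$. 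The device linking covers to (co)spanning is the dynamical ball $B_n(x,U):=\{y\in X:(f^i x,f^i y)\in U,\ 0\le i\le n-1\}=\bigcap_{i=0}^{n-1}f^{-i}(U[f^i x])$, which is itself a member of $\bigvee_{i=0}^{n-1}f^{-i}(\mathcal C(U))$ (choose the cross-section centred at $f^i x$ in the $i$-th factor); hence any family of dynamical balls that covers $X$ is a sub-cover of the join.

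For $h_{\textsf{u}}(f)\le h_{\textsf{uc}}(f)$ I would use $h_{\textsf{u}}(f)=h_{\textsf{span}}(f)$ and bound spanning numbers by covering numbers. Fix an entourage $V$ and, using axioms U1, U3 and U4, choose a symmetric $U\in\mathcal U$ with $U\circ U\subseteq V$. Let $C_1,\dots,C_N$ be an optimal sub-cover of $\bigvee_{i=0}^{n-1}f^{-i}(\mathcal C(U))$, so that $N=\textsf{cov}(n,U,f)$ and $C_j=\bigcap_{i=0}^{n-1}f^{-i}(U[z_i^{(j)}])$; pick $p_j\in C_j$. If $x\in C_j$ then for every $i$ both $(z_i^{(j)},f^i x)\in U$ and $(z_i^{(j)},f^i p_j)\in U$, so symmetry together with $U\circ U\subseteq V$ yields $(f^i x,f^i p_j)\in V$. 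Thus $\{p_1,\dots,p_N\}$ is $(n,V,f)$-spanning and $\textsf{span}(n,V,f)\le\textsf{cov}(n,U,f)$. Passing to the limit gives $\textsf{span}(V,f)\le\textsf{cov}(U,f)\le h_{\textsf{uc}}(f)$, and taking the supremum over $V$ produces $h_{\textsf{span}}(f)\le h_{\textsf{uc}}(f)$.

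For the reverse inequality $h_{\textsf{uc}}(f)\le h_{\textsf{u}}(f)=h_{\textsf{sep}}(f)$ I would bound covering numbers by separating numbers. Fix $U\in\mathcal U$ and set $W=U\cap U^{T}$, a symmetric entourage with $W\subseteq U$. Let $A$ be a maximal $(n,W,f)$-separated set, so $|A|=\textsf{sep}(n,W,f)$. Maximality forces $A$ to be $(n,W,f)$-spanning: for $x\in X$ there is $y\in A$ with $(f^i x,f^i y)\in W$ for all $i$, and since $W$ is symmetric and $W\subseteq U$ this gives $(f^i y,f^i x)\in U$, i.e. $x\in B_n(y,U)$. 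Hence $\{B_n(y,U):y\in A\}$ is a sub-cover of the join and $\textsf{cov}(n,U,f)\le|A|=\textsf{sep}(n,W,f)$. Passing to the limit and taking the supremum over $U$ yields $h_{\textsf{uc}}(f)\le h_{\textsf{sep}}(f)$, which combined with the previous paragraph proves the equality.

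The genuinely routine parts are the existence of the limits and the monotonicity inequalities already recorded in the preceding Facts. I expect the main obstacle to be bookkeeping the loss of symmetry of entourages: unlike the metric case, where one Lebesgue number governs every comparison, each inequality here forces a controlled shift between entourages (from $V$ to a symmetric $U$ with $U\circ U\subseteq V$, and from $U$ to $W=U\cap U^{T}$), and one must check that these shifts are absorbed once the supremum over all entourages is taken, so that no spurious constant contaminates the entropy. The two places where this demands care are verifying that the chosen dynamical balls really lie in $\bigvee_{i=0}^{n-1}f^{-i}(\mathcal C(U))$ and that a maximal separated set is spanning precisely because $W$ is symmetric.
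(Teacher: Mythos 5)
Your proposal is correct, but there is nothing in the paper to compare it against: this statement is one of the unproved background Facts of Section 1, quoted from the literature without argument. What you wrote is the standard Bowen--Dinaburg comparison transplanted to uniform spaces, and both halves are sound: the centres $p_j$ of an optimal sub-cover of $\bigvee_{i=0}^{n-1}f^{-i}(\mathcal{C}(U))$, with $U$ symmetric and $U\circ U\subseteq V$, form an $(n,V,f)$-spanning set, giving $h_{\textsf{span}}(f)\le h_{\textsf{uc}}(f)$; and the dynamical balls $B_n(y,U)$ centred at a maximal $(n,W,f)$-separated set, $W=U\cap U^{T}$, are members of the join and cover $X$, giving $\textsf{cov}(n,U,f)\le\textsf{sep}(n,W,f)$ and hence $h_{\textsf{uc}}(f)\le h_{\textsf{sep}}(f)$; the paper's earlier Fact $h_{\textsf{span}}(f)=h_{\textsf{sep}}(f)=h_{\textsf{u}}(f)$ then closes the loop. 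Your two flagged ``places demanding care'' (membership of dynamical balls in the join, and symmetry of $W$ forcing a maximal separated set to span) are exactly the right ones, and both check out.

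One caveat and three harmless slips. The caveat: the paper's official definition is $h_{\textsf{uc}}(f)=\sup\{h_{\textsf{uc}}(f,K):K\in\mathcal{K}(X)\}$, a supremum over compact subsets, whereas you work with global covering numbers; on a non-compact space these readings genuinely differ (for the identity on $\mathbb{R}$ every global spanning, separating and covering number is infinite, while all compact-relative growth rates vanish), so the paper's mixed conventions --- global for $h_{\textsf{u}}$, compact-relative for $h_{\textsf{uc}}$ --- do not literally match either side. The honest fix is to relativize both of your comparisons to a fixed compact $K$ (choose $p_j\in C_j\cap K$, take $A\subseteq K$ maximal separated inside $K$), which your combinatorics permits verbatim; your global-global version is the other consistent reading. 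The slips: elements of an optimal sub-cover are automatically nonempty (you need this to choose $p_j$); a maximal-under-inclusion separated set need not realize $\textsf{sep}(n,W,f)$, but you only use $|A|\le\textsf{sep}(n,W,f)$, so the chain of inequalities survives; and uniform continuity of $f$ is never actually invoked in either comparison --- it matters elsewhere in the theory, not here.
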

\begin{fact}\label{f1}
 Let $(X,\mathcal{U})$ be a compact uniform space. If $f:X\rightarrow X$ is a continuous map, then $h_{\textsf{uc}}(f)=h_{\textsf{u}}(f)=h_{\textsf{top}}(f)$.
\end{fact}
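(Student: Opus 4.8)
The plan is to treat the two equalities separately and to reduce the first one to a result already in hand. Since $X$ is compact, every continuous self-map is uniformly continuous, so the preceding Fact applies verbatim and yields $h_{\textsf{uc}}(f)=h_{\textsf{u}}(f)$ at no extra cost. Thus the whole problem collapses to proving $h_{\textsf{uc}}(f)=h_{\textsf{top}}(f)$. Writing $h(f,\alpha)=\lim_{n}\tfrac1n\log N(\bigvee_{i=0}^{n-1}f^{-i}(\alpha))$ for a cover $\alpha$ and taking $K=X$ in the definition of uniform covering entropy, we have $h_{\textsf{uc}}(f)=\sup_{U\in\mathcal U}h(f,\mathcal C(U))$, while $h_{\textsf{top}}(f)=\sup_{\alpha}h(f,\alpha)$ ranges over open covers $\alpha$. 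I would then prove the two inequalities between these suprema.

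The common tool is monotonicity of cover entropy under refinement: if $\beta\succcurlyeq\alpha$, then $h(f,\beta)\ge h(f,\alpha)$. I would establish this as a short lemma, observing that $f^{-i}(\beta)\succcurlyeq f^{-i}(\alpha)$, that a join of refinements refines the corresponding join, and that whenever one cover refines another the finer one admits no optimal subcover of smaller cardinality; hence $N(\bigvee_{i=0}^{n-1}f^{-i}(\beta))\ge N(\bigvee_{i=0}^{n-1}f^{-i}(\alpha))$ for every $n$, and dividing by $n$ and passing to the limit gives the inequality.

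For $h_{\textsf{uc}}(f)\le h_{\textsf{top}}(f)$, the subtle point is that a uniform cover $\mathcal C(U)$ need not consist of open sets, so it is not directly admissible in the supremum defining $h_{\textsf{top}}(f)$. To repair this I would, given $U\in\mathcal U$, pass to an open entourage $V\subseteq U$: the interior of $U$ in $X\times X$ still contains $\Delta_X$ and hence lies in $\mathcal U$, and each slice $V[x]$ is open in $X$. Then $\mathcal C(V)$ is a genuine open cover, and $V\subseteq U$ gives $V[x]\subseteq U[x]$, so $\mathcal C(V)\succcurlyeq\mathcal C(U)$. Monotonicity yields $h(f,\mathcal C(U))\le h(f,\mathcal C(V))\le h_{\textsf{top}}(f)$, and taking the supremum over $U$ gives the claim.

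For the reverse inequality $h_{\textsf{top}}(f)\le h_{\textsf{uc}}(f)$, the main obstacle—and the only place compactness is used essentially—is a Lebesgue-number lemma for compact uniform spaces: for every open cover $\alpha$ there is $U\in\mathcal U$ with $\mathcal C(U)\succcurlyeq\alpha$. I would prove it by choosing for each $x\in X$ a member $A_x\in\alpha$ containing $x$ and a symmetric entourage $V_x$ with $(V_x\circ V_x)[x]\subseteq A_x$, extracting a finite subcover $V_{x_1}[x_1],\dots,V_{x_k}[x_k]$ of $X$, and setting $U=\bigcap_{j=1}^{k}V_{x_j}$; if $x\in V_{x_j}[x_j]$ then $U[x]\subseteq(V_{x_j}\circ V_{x_j})[x_j]\subseteq A_{x_j}$, so $\mathcal C(U)$ refines $\alpha$. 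Monotonicity then gives $h(f,\alpha)\le h(f,\mathcal C(U))\le h_{\textsf{uc}}(f)$ for every open cover $\alpha$, and passing to the supremum over $\alpha$ completes the proof; combining this with $h_{\textsf{uc}}(f)=h_{\textsf{u}}(f)$ produces the full chain of equalities.
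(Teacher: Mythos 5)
The paper gives no proof of this Fact to compare against: it is one of the background Facts of Section~1, all stated without argument (presumably as known results from the literature on uniform entropy). Judged on its own merits, your proof is correct and follows the standard route. The reduction of $h_{\textsf{uc}}(f)=h_{\textsf{u}}(f)$ to the preceding Fact, via the Heine--Cantor-type theorem that a continuous self-map of a compact uniform space is automatically uniformly continuous, is legitimate; and the two-inequality scheme based on monotonicity of $N(\cdot)$ under refinement is the classical argument. It is worth noting that the key lemma you isolate for the inequality $h_{\textsf{top}}(f)\le h_{\textsf{uc}}(f)$ --- for every open cover $\alpha$ of a compact uniform space there is an entourage $U$ with $\mathcal{C}(U)\succcurlyeq\alpha$ --- is precisely the paper's Lemma~\ref{lem1}, which the paper likewise states without proof; your argument with $U=\bigcap_{j=1}^{k}V_{x_j}$ in fact supplies a proof of that lemma too.

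One justification does need repair. You assert that the interior of an entourage $U$ in $X\times X$ is again an entourage ``because it contains $\Delta_X$.'' Containing the diagonal is necessary but not sufficient for membership in $\mathcal{U}$ (the diagonal itself is generally not an entourage), so as written this step is a non sequitur. The correct argument is standard: choose a symmetric entourage $W$ with $W\circ W\circ W\subseteq U$; for any $(x,y)\in W$ the set $W[x]\times W[y]$ is a neighborhood of $(x,y)$ contained in $U$, hence $W\subseteq \mathrm{int}(U)$, and then axiom U1 (supersets of entourages are entourages) gives $\mathrm{int}(U)\in\mathcal{U}$. The same device cleans up a second small informality: in your Lebesgue-number argument the sets $V_x[x]$ from which you extract a finite subcover need not be open, so either take the $V_x$ to be open entourages from the start or pass to interiors before invoking compactness. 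With these two repairs the proof is complete.
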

\subsection*{Expansive maps:}
To simplify we introduce the following notation. If $f:X\rightarrow X$ is a map of a uniform space $(X,\mathcal{U})$ and $D$ is an entourage of $X$, then we define
$$
\Gamma^+(x,D,f)=\left(\bigcap_{i\in\mathbb{N}}F^{-i}(D)\right)[x];\qquad
\Gamma(x,D,f)=\left(\bigcap_{i\in\mathbb{Z}}F^{-i}(D)\right)[x].
$$

We say that a map $f:X\rightarrow X$ of a uniform space $(X,\mathcal{U})$ is a positively expansive (resp. expansive) if there is an entourage $D$ such that  $\Gamma^+(x,D,f)=\{x\}$  (resp $\Gamma(x,D,f)=\{x\}$ ) for all $x\in X$. We shall call such a $D$ a positive expansivity neighborhood (resp. expansivity neighborhood) of $f$. As one of notions that are weaker than expansivity we capture the notion which is called sensitive dependence on initial conditions, This is defined by the property that if there is an entourage $D$ such that for each $x\in X$ and each entourage $U$, we obtain $U[x]\cap(X\setminus\Gamma(x,D,f))\neq\emptyset$\cite{7}.\\

\section{Main Results}
we say that a map $f:X\rightarrow X$ of a uniform space $(X,\mathcal{U})$ is a contraction if for every entourage $D$ of $X$ there is an entourage $U\subset D$ of $X$ satisfying
$$f(U[x]) \subset U[f(x)],~\forall x\in X$$
\begin{thm}
Let $(X,\mathcal{U})$ be a totally bounded uniform space. If $f:X\rightarrow X$ is a contraction then $h_{\textsf{u}}(f)=h_{\textsf{uc}}(f)=0$
\end{thm}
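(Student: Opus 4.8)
The plan is to prove directly that $h_{\textsf{span}}(f)=0$; since $h_{\textsf{u}}(f)=h_{\textsf{span}}(f)=h_{\textsf{sep}}(f)$ and $h_{\textsf{uc}}(f)=h_{\textsf{u}}(f)$ by the facts recorded above, this settles both claimed equalities at once. Thus it suffices to bound, for each fixed entourage $E$, the minimal cardinalities $\textsf{span}(n,E,f)$ uniformly in $n$, since then the exponential growth rate $\textsf{span}(E,f)$ vanishes.

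First I would invoke the contraction hypothesis at the level $D=E$: this yields an entourage $U\subseteq E$ with $f(U[x])\subseteq U[f(x)]$ for every $x\in X$. The key observation is that this inclusion is stable under iteration. Precisely, I would show by induction on $i$ that $(x,y)\in U$ implies $(f^i(x),f^i(y))\in U$ for all $i\geq 0$: the base case is trivial, and if $f^i(y)\in U[f^i(x)]$ then $f^{i+1}(y)\in f(U[f^i(x)])\subseteq U[f^{i+1}(x)]$. Because $U\subseteq E$, it follows that $(x,y)\in U$ forces $(f^i(x),f^i(y))\in E$ for every $i$; that is, two $U$-close points remain $E$-close along their entire forward orbits.

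Next I would bring in total boundedness. Applied to the entourage $U$, it produces a finite set $A=\{a_1,\dots,a_k\}$ with $X=\bigcup_{j=1}^{k}U[a_j]$. I claim $A$ is an $(n,E,f)$-spanning set for every $n$: given $x\in X$, choose $a_j$ with $(a_j,x)\in U$, and the previous step gives $(f^i(a_j),f^i(x))\in E$ for all $i=0,\dots,n-1$. Hence $\textsf{span}(n,E,f)\leq k$ independently of $n$, so $\textsf{span}(E,f)=\limsup_{n}\tfrac{1}{n}\log \textsf{span}(n,E,f)\leq \limsup_{n}\tfrac{1}{n}\log k=0$. As $E$ was arbitrary, taking the supremum over all entourages yields $h_{\textsf{span}}(f)=0$, and therefore $h_{\textsf{u}}(f)=h_{\textsf{uc}}(f)=0$.

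I do not anticipate a serious obstacle: the entire argument rests on the single fact that the contraction inequality iterates, after which total boundedness turns ``orbits never separate'' into a fixed finite spanning count. The only point needing minor care is checking that a contraction is uniformly continuous, so that $h_{\textsf{u}}$ and $h_{\textsf{uc}}$ are even defined; this too falls out of the inclusion $f(U[x])\subseteq U[f(x)]$ together with $U\subseteq E$, since $(x,y)\in U$ then gives $(f(x),f(y))\in E$.
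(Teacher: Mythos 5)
Your proposal is correct and follows essentially the same route as the paper: invoke the contraction hypothesis to get an entourage $U\subseteq E$ whose closeness propagates under iteration ($(x,y)\in U$ implies $(f^i(x),f^i(y))\in U\subseteq E$ for all $i$), then use total boundedness to produce a single finite $U$-net that serves as an $(n,E,f)$-spanning set for every $n$, forcing the growth rate to vanish. Your write-up is in fact slightly cleaner than the paper's (you make the induction explicit and note the uniform-continuity point), but the underlying argument is identical.
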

\begin{proof}
Assume that $D$ be any entourage of $X$. Then there exists an entourage $U\subset D$ such that $f(U[x])\subset U[f(x)]$ for all $x\in X$. Since $X$ is totally bounded there exists a finite set $A$ such that $U[F]=X$. We shall show that for any $n\in\mathbb{N}$, the set $A$ is an $(n,D,f)$-spanning set. For each $x\in X$ there exists a point $y\in A$ such that $(x,y)\in U$. Hence $y\in U[x]$ and we obtain $f(y)\in f(U[x])\subset U[f(x)]$. Hence $(f(x),f(y))\in U\subset D$ and $f^2(x)\in f(U[f(x)])\subset U[f^2(x)]$, so $(f^2(x),f^2(y))\in U\subset D$. Inductively we obtain $\cup_{i=1}^{n}(f^i(x),f^i(y))\in D$. This implies that $A$ is an $(n,D,f)$-spanning set. Thus $h_{\textsf{uc}}(f)=h_{\textsf{u}}(f)=h_{\textsf{span}}(f)=0$
\end{proof}
\begin{defn}
Let $(X,\mathcal{U})$ be a compact uniform space and let $U$ be an entourage of $X$. A finite uniform cover $\mathcal{C}(U)=\{U[x]:~x\in X\}$ is called a \textit{uniform generator} for $f$, provided that for any bi-sequence $\{A_n\}_{i\in\mathbb{Z}}\subset\mathcal{C}(U)$, the intersection $\cap_{n=-\infty}^{\infty}f^{-n}(\textsf{cl}(A_n))$ contains at most one point.
\end{defn}
\begin{lem}\label{lem1}
Let $\alpha$ be an open covering of the compact uniform space $(X,\mathcal{D})$. Then there exists a symmetric entourage $D$ such that each member of $\mathcal{C}(D)$ is contained in some member of $\alpha$.
\end{lem}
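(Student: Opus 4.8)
The plan is to prove the uniform-space analogue of the classical Lebesgue number lemma. First I would exploit that $\alpha$ is an open cover: for each $x\in X$ choose a member $A_x\in\alpha$ with $x\in A_x$. Since $A_x$ is open in the uniform topology and the cross sections $E[x]$ (for $E$ an entourage) form a neighborhood base at $x$, there is an entourage $E_x$ with $E_x[x]\subseteq A_x$.

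Next I would pass to a symmetric ``half-sized'' entourage. Using axiom U4, pick $\widehat{E_x}$ with $\widehat{E_x}\circ\widehat{E_x}\subseteq E_x$, and then replace it by $\widehat{E_x}\cap\widehat{E_x}^{T}$, which is again an entourage by U1 and U3 and is symmetric; the composition inclusion is preserved since the new set is smaller. The purpose of this step is to secure a triangle-inequality-type estimate: whenever $(x,a)\in\widehat{E_x}$ and $(a,b)\in\widehat{E_x}$ we get $(x,b)\in E_x$.

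Then I would invoke compactness. The family $\{\operatorname{int}(\widehat{E_x}[x]):x\in X\}$ is an open cover of $X$, because each $x$ lies in the interior of its own neighborhood $\widehat{E_x}[x]$. By compactness it admits a finite subcover, corresponding to points $x_1,\dots,x_n$. I then set $D=\bigcap_{i=1}^{n}\widehat{E_{x_i}}$, which is an entourage by U1 and is symmetric, being a finite intersection of symmetric sets.

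Finally I would verify that $D$ is the desired entourage. Given any $x\in X$, it lies in $\operatorname{int}(\widehat{E_{x_j}}[x_j])\subseteq\widehat{E_{x_j}}[x_j]$ for some $j$, so $(x_j,x)\in\widehat{E_{x_j}}$. For an arbitrary $y\in D[x]$ we have $(x,y)\in D\subseteq\widehat{E_{x_j}}$; combining the two relations through the point $x$ and using the composition inclusion yields $(x_j,y)\in\widehat{E_{x_j}}\circ\widehat{E_{x_j}}\subseteq E_{x_j}$, hence $y\in E_{x_j}[x_j]\subseteq A_{x_j}$. Therefore $D[x]\subseteq A_{x_j}\in\alpha$, so every member of $\mathcal{C}(D)$ sits inside a member of $\alpha$. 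The reasoning is largely routine; the only point demanding care is the symmetrization together with the correct bookkeeping in the composition step, ensuring the ``midpoint'' $x$ links $x_j$ to $y$ in the right order so that the factor-of-two entourage $\widehat{E_{x_j}}$ closes up to $E_{x_j}$.
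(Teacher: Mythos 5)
Your proof is correct. Note that the paper states Lemma~\ref{lem1} without any proof at all, so there is nothing to compare against; your argument supplies the missing justification, and it is the standard Lebesgue-number argument for compact uniform spaces: shrink each cover element to an entourage cross-section via the neighborhood base, halve and symmetrize by U4/U3/U1, extract a finite subcover of the interiors $\operatorname{int}(\widehat{E_{x_i}}[x_i])$, and intersect. One small remark: the symmetrization is needed only so that the resulting $D$ is symmetric, as the statement demands; your chaining step $(x_j,x)\in\widehat{E_{x_j}}$, $(x,y)\in D\subseteq\widehat{E_{x_j}}$, hence $(x_j,y)\in E_{x_j}$, already works without symmetry, since $x\in\widehat{E_{x_j}}[x_j]$ directly means $(x_j,x)\in\widehat{E_{x_j}}$ under the paper's convention for cross sections and composition.
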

\begin{prop}
Let $(X,\mathcal{U})$ be a totally bounded uniform space and let $f:X\rightarrow X$ be a continuous map. Then $f$ is expansive if and only if $f$ has a uniform generator.
\end{prop}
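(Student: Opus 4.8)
The plan is to mirror the classical equivalence ``expansive $\iff$ admits a generator'' from the compact metric setting (Walters, Keynes--Robbins), transcribing the two constructions into the language of entourages. For the forward implication I would manufacture a uniform generator out of an expansivity entourage by passing to a sufficiently refined symmetric uniform cover, extracting a finite subcover via total boundedness, and then reading off the single-point property of the bi-infinite intersections directly from expansivity. For the converse I would recover an expansivity entourage from a given generator using the uniform Lebesgue-number statement provided by Lemma \ref{lem1}. The two halves are essentially dual: total boundedness supplies the finite subcover in one direction, while Lemma \ref{lem1} supplies the ``Lebesgue number'' in the other.

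For the direction ``expansive $\Rightarrow$ generator'', let $D$ be an expansivity entourage, so that $\Gamma(x,D,f)=\{x\}$ for every $x$. By repeated application of axiom U4 together with U3 and U1 I would choose a symmetric entourage $E$ with $(E\circ E)\circ(E\circ E)\subseteq D$. The key elementary fact I would establish is that $\textsf{cl}(E[x])\subseteq (E\circ E)[x]$ for symmetric $E$: if $y\in\textsf{cl}(E[x])$ then $E[y]\cap E[x]\neq\emptyset$, which after using symmetry yields $(x,y)\in E\circ E$. Consequently any two points of $\textsf{cl}(E[x])$ are $D$-related, i.e. $\textsf{cl}(E[x])$ is $D$-small. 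Total boundedness then gives a finite set $\{x_1,\dots,x_k\}$ with $\bigcup_j E[x_j]=X$; put $\alpha=\{E[x_1],\dots,E[x_k]\}$. For any bi-sequence $\{A_n\}\subseteq\alpha$, if $u,v\in\bigcap_{n\in\mathbb{Z}}f^{-n}(\textsf{cl}(A_n))$ then $f^n(u),f^n(v)\in\textsf{cl}(A_n)$ forces $(f^n(u),f^n(v))\in D$ for all $n$, hence $v\in\Gamma(u,D,f)=\{u\}$ and $u=v$. Thus $\alpha$ is a uniform generator.

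For the converse, let $\alpha=\{A_1,\dots,A_k\}$ be a uniform generator. Applying Lemma \ref{lem1} to the open cover $\alpha$ produces a symmetric entourage $D$ such that each member $D[z]$ of $\mathcal{C}(D)$ lies in some $A_{j}$. I claim $D$ is an expansivity entourage. Indeed, fix $x$ and take $y\in\Gamma(x,D,f)$, so $(f^n(x),f^n(y))\in D$, i.e. $f^n(y)\in D[f^n(x)]$, for every $n\in\mathbb{Z}$; since also $f^n(x)\in D[f^n(x)]$ by U2 and $D[f^n(x)]\subseteq A_{j(n)}$ for some index $j(n)$, both $f^n(x)$ and $f^n(y)$ lie in $A_{j(n)}$. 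Hence $x,y\in f^{-n}(A_{j(n)})\subseteq f^{-n}(\textsf{cl}(A_{j(n)}))$ for all $n$, so $x,y\in\bigcap_{n\in\mathbb{Z}}f^{-n}(\textsf{cl}(A_{j(n)}))$, which contains at most one point by the generator property; therefore $x=y$ and $\Gamma(x,D,f)=\{x\}$.

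The step I expect to be the main obstacle is the closure control in the forward direction: the definition of generator involves $\textsf{cl}(A_n)$ rather than $A_n$, so the refined cover must be chosen with enough ``slack'' (the condition $(E\circ E)\circ(E\circ E)\subseteq D$) to guarantee that even the closures remain $D$-small after one passes to the finite subcover. Everything else is a routine bookkeeping of the entourage axioms, with total boundedness used only to finitize the cover and Lemma \ref{lem1} used only to supply the uniform Lebesgue number in the converse.
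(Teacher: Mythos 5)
Your proposal is correct and follows essentially the same route as the paper: the forward direction uses total boundedness together with an entourage power small enough to absorb closures (your $E^{4}\subseteq D$ versus the paper's $\widehat{D}^{6}\subseteq D$), and the converse uses Lemma~\ref{lem1} as a uniform Lebesgue number. The only cosmetic difference is that in the converse you show the Lebesgue entourage $D$ itself is an expansivity entourage, whereas the paper passes to a further refinement $\widehat{D}$ with $\widehat{D}^{2}\subseteq D$; both arguments are valid.
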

\begin{proof}
Assume that $f$ is expansive with expansivity neighborhood $D$. Choosing an entourage $\widehat{D}$ with $\widehat{D}^6\subset D$, since $X$ is totally bounded, there exists a finite set $F=\{x_1,\dots,x_m\}$ such that $\widehat{D}[F]=X$. Let $\{A_n\}_{n\in\mathbb{Z}}\subset\mathcal{C}(\widehat{D})=\{\widehat{D}[x_i]:~i=1,2,\dots,m\}$ and $x,y\in\cap_{n=-\infty}^{\infty}f^{-n}(\textsf{cl}(A_i))$. Then $f^n(x),f^n(y)\in \textsf{cl}(A_n)$ for all $n\in\mathbb{Z}$. In other hand, for each $n\in\mathbb{Z}$ there exists $x_n$ such that $\textsf{cl}(A_n)\subset\textsf{cl}(\widehat{D}[x_n])\subset\widehat{D}^3[x_n]$. Thus $(f^n(x),f^n(y))\in\widehat{D}^6\subset D$. This implies that $y\in\Gamma(x,D,f)$, that is $y=x$.\\
Conversely suppose that $\mathcal{C}(U)$ is a generator for $f$. Let $D$ be the entourage given by Lemma \ref{lem1} and choose an entourage $\widehat{D}$ such that $\widehat{D}^2\subset D$. We shall show that $\widehat{D}$ is an expansivity neighborhood for $f$. Assume that $y\in\Gamma(x,\widehat{D},f)$, then $f^n(x)\in \widehat{D}[f^n(y)]$. Hence there exists a member $A_n$ in $\alpha$ such that $\widehat{D}[f^n(y)]\subset D[f^n(y)]\subset A_n$. Thus $f^n(y)\in\widehat{D}[f^n(x)]\subset \widehat{D}^2[f^n(y)]\subset D[f^n(y)]\subset A_n$. Therefore
$$x,y\in\bigcap_{n=-\infty}^{\infty}f^{-n}(\textsf{cl}(A_n)).$$
Since $\mathcal{C}(U)$ is a uniform generator $y=x$. This complete the proof.
\end{proof}
A sequentially compact uniform space is totally bounded. Therefore  a sequentially compact, non-compact uniform space is sequentially complete, non-complete (totally bounded). The first ordinal space is an example of one such uniform space \cite{8}.
\begin{thm}\label{t1}
Let $(X,\mathcal{U})$ be a sequentially compact uniform space and let $f:X\rightarrow X$ be a homeomorphism. Let $\alpha$ be a uniform generator for $f$. Then for each entourage $E$ of $X$ there exists $n>0$ so that for each element $A$ of $\bigvee_{i=-n}^{n}f^{-i}(\alpha)$ there is a point $x\in A$ such that $A\subseteq E[x]$.
\end{thm}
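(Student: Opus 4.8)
The plan is to argue by contradiction, using sequential compactness to extract limit points and the \emph{finiteness} of the uniform generator $\alpha=\mathcal{C}(U)$ to assemble a single bi-sequence of cover elements whose iterated preimages trap two distinct points, thereby contradicting the defining property of a uniform generator. Throughout, the statement is understood for the nonempty members of the join (the empty member being vacuous).

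First I would suppose the conclusion fails for some entourage $E$, and fix a symmetric entourage $V$ with $V\circ V\subseteq E$. Negating the conclusion, for every $n$ there is a nonempty element $A_n\in\bigvee_{i=-n}^{n}f^{-i}(\alpha)$ that is contained in no $E[x]$ with $x\in A_n$; choosing any $x_n\in A_n$ then yields $y_n\in A_n$ with $(x_n,y_n)\notin E$. Writing $A_n=\bigcap_{i=-n}^{n}f^{-i}(C_i^{(n)})$ with each $C_i^{(n)}\in\alpha$, membership gives $f^i(x_n),f^i(y_n)\in C_i^{(n)}$ for all $|i|\le n$. By sequential compactness I would then pass to a subsequence along which $x_n\to x$ and $y_n\to y$. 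The choice of $V$ forces $x\neq y$: if $x=y$, then for large $n$ both $x_n,y_n\in V[x]$, so by symmetry $(x_n,y_n)\in V\circ V\subseteq E$, contradicting $(x_n,y_n)\notin E$.

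The main step, and the part I expect to be most delicate, is a diagonal extraction over $i\in\mathbb{Z}$. Since $\alpha$ is finite, for each fixed index $i$ the cover elements $C_i^{(n)}$ range over a finite set, so by enumerating $\mathbb{Z}$ as $0,1,-1,2,-2,\dots$ and repeatedly thinning the subsequence I can arrange that for every $i$ there is a single $B_i\in\alpha$ with $C_i^{(n)}=B_i$ for all large $n$ along the diagonal subsequence. Fixing $i$ and using that $f$ is a homeomorphism (so $f^i$ is continuous for every $i\in\mathbb{Z}$), the convergences $f^i(x_n)\to f^i(x)$ and $f^i(y_n)\to f^i(y)$ together with $f^i(x_n),f^i(y_n)\in B_i$ give $f^i(x),f^i(y)\in\textsf{cl}(B_i)$.

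Consequently $x,y\in\bigcap_{n=-\infty}^{\infty}f^{-n}(\textsf{cl}(B_n))$ for the bi-sequence $\{B_n\}_{n\in\mathbb{Z}}\subset\mathcal{C}(U)$. Since $\alpha=\mathcal{C}(U)$ is a uniform generator, this intersection contains at most one point, forcing $x=y$ and contradicting $x\neq y$. This contradiction establishes the theorem. The two places demanding care are the passage to closures (which is exactly why the generator condition is stated with $\textsf{cl}$) and bookkeeping the diagonal subsequence so that the eventual constancy $C_i^{(n)}=B_i$ holds simultaneously for each fixed $i$ while preserving $x_n\to x$ and $y_n\to y$.
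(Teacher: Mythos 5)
Your proof is correct and takes essentially the same route as the paper: both argue by contradiction, pick $x_n,y_n$ in a bad element with $(x_n,y_n)\notin E$, use sequential compactness to get $x_n\to x$, $y_n\to y$ with $x\neq y$, use finiteness of $\alpha$ to fix a bi-sequence $\{B_i\}\subset\alpha$ with $f^i(x),f^i(y)\in\textsf{cl}(B_i)$ for all $i$, and contradict the uniform generator property. The only differences are cosmetic: you spell out why $x\neq y$ (the paper merely asserts it), and your diagonal extraction can be lightened to the paper's per-index pigeonhole—for each fixed $i$ infinitely many $C_i^{(n)}$ coincide, which already suffices because the full subsequence $f^i(x_n)$ converges to $f^i(x)$.
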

\begin{proof}
For contracting a contradiction suppose that there exists an entourage $E$ of $X$ so that for all $j>0$ there exist $A_{j,i}\in\alpha$, $-j\leq i\leq j$ and there exist $B_j\in\bigcap_{i=-j}^{j}f^{-i}(A_{j,i})$ such that for each $x_j\in B_j$ there exists $y_j\in B_j$ with $(y_j,x_j)\notin E$. By sequentially compactness there exists a subsequence $\{j_k\}$ of natural numbers such that $x_{j_k}\rightarrow x$ and $y_{j_k}\rightarrow y$ for some $x$ and $y$ in $X$. Then $x\neq y$. Since $\alpha$ is finite, infinitely many $A_{j_k,0}$ coincide. Then $x_{j_k},y_{j_k}\in A$ for infinitely many $K$ and hence $x,y\in \textsf{cl}(A_0)$. Similarly for each $m$, infinitely many $A_{j_k,m}$ coincide and we obtain $A_m\in\alpha$ with $x,y\in T^{-m}(\textsf{cl}(A_m))$. Thus $x,y\in\bigcap_{-\infty}^{+\infty}f^{-n}(\textsf{cl}(A_n))$ which is a contradiction.
\end{proof}
\begin{thm}
Let $(X,\mathcal{U})$ be a sequentially compact uniform space and let $f:X\rightarrow X$ be a homeomorphism. If $D$ is an expansivity entourage for $f$, then $\textsf{sep}(f,D)=h_{\textsf{u}}(f)$.
\end{thm}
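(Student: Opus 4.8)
The plan is to show that the supremum defining $h_{\textsf{u}}(f)=h_{\textsf{sep}}(f)=\sup\{\textsf{sep}(U,f):U\in\mathcal{U}\}$ is already attained at the single expansivity entourage $D$. Since $D$ is just one entourage among all of them, the inequality $\textsf{sep}(D,f)\leq h_{\textsf{sep}}(f)$ is immediate, so the whole content lies in the reverse inequality $\textsf{sep}(E,f)\leq\textsf{sep}(D,f)$ for an arbitrary entourage $E$. This is the uniform-space counterpart of the classical fact that for an expansive homeomorphism one may compute the entropy using a single expansivity scale rather than letting the scale tend to the diagonal.

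First I would isolate the following resolving lemma, which is the engine of the argument: for every entourage $E$ there is an $N=N(E)>0$ such that whenever $(f^{i}(u),f^{i}(v))\in D$ for all $|i|\leq N$, one has $(u,v)\in E$. Since $X$ is sequentially compact, hence totally bounded, and $D$ is an expansivity entourage, this is essentially the content carried by Theorem~\ref{t1}: passing through the uniform generator $\alpha$ associated with $D$ by the Proposition, membership of $u,v$ in a common element of $\bigvee_{i=-N}^{N}f^{-i}(\alpha)$ translates into $D$-closeness of the orbit window $\{(f^{i}(u),f^{i}(v)):|i|\leq N\}$, and Theorem~\ref{t1} then forces $(u,v)\in E$. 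Equivalently, and more cleanly for the counting below, one may reprove the lemma directly by the same sequential-compactness contradiction used for Theorem~\ref{t1}: if it failed one would extract $u_k\to u$ and $v_k\to v$ with $(u,v)\notin E$ yet $(f^{i}(u),f^{i}(v))\in \textsf{cl}(D)$ for every $i\in\mathbb{Z}$, so that $v\in\Gamma(u,D,f)=\{u\}$, a contradiction.

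With the lemma in hand, the conclusion is a shift-and-compare count. Take a maximal $(n,E,f)$-separated set $S$, so that $|S|=\textsf{sep}(n,E,f)$. For distinct $x,y\in S$ choose $j\in\{0,\dots,n-1\}$ with $(f^{j}(x),f^{j}(y))\notin E$; the contrapositive of the lemma applied to $u=f^{j}(x)$, $v=f^{j}(y)$ yields an $i$ with $|i|\leq N$ and $(f^{i+j}(x),f^{i+j}(y))\notin D$, where $i+j$ ranges in $\{-N,\dots,n+N-1\}$. Because $f$ is a homeomorphism, the translated set $f^{-N}(S)$ has the same cardinality as $S$, and the index shift $m=i+j+N\in\{0,\dots,n+2N-1\}$ shows it to be $(n+2N,D,f)$-separated, whence $\textsf{sep}(n,E,f)\leq\textsf{sep}(n+2N,D,f)$. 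Taking $\limsup_{n}\frac{1}{n}\log(\cdot)$ and noting that the fixed additive constant $2N$ is absorbed into the exponential growth rate gives $\textsf{sep}(E,f)\leq\textsf{sep}(D,f)$; taking the supremum over $E$ then yields $h_{\textsf{u}}(f)=h_{\textsf{sep}}(f)\leq\textsf{sep}(D,f)$, and hence equality.

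The step I expect to be the main obstacle is the bookkeeping of entourage powers in the resolving lemma. Theorem~\ref{t1} is phrased through the finite generator cover $\mathcal{C}(\widehat{D})$, and membership in a common element of $\bigvee_{i=-N}^{N}f^{-i}(\alpha)$ only delivers $\widehat{D}^{2}$-closeness of the orbit windows, whereas the contrapositive must produce a genuine failure of $D$-closeness so that $f^{-N}(S)$ is separated with respect to the prescribed $D$ and not merely with respect to some larger or smaller comparable entourage; landing the count exactly on $\textsf{sep}(D,f)$ is what makes the supremum collapse. Fixing this requires choosing nested symmetric refinements $\widehat{D}$ with $\widehat{D}^{6}\subset D$ as in the Proposition and tracking them carefully, which is precisely why I favour the direct reproof of the lemma; handling $\textsf{cl}(D)$ versus $D$ in the limiting step is the remaining delicate point, but both become routine once the scheme above is fixed.
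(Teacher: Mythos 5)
Your overall scheme is the same as the paper's: both proofs rest on a uniform-expansivity statement ("pairs that stay $D$-close over a long window must be $E$-close", equivalently "pairs that are not $E$-close escape $D$ within a bounded window") extracted from compactness, followed by exactly your shift-and-compare count $\textsf{sep}(n,E,f)\leq\textsf{sep}(n+2N,D,f)$ applied to $f^{-N}(S)$. The only cosmetic differences are that the paper works on compact subsets $K$ and gets the uniform escape time from an open-cover compactness argument on $K\times K\setminus U$, while you work globally and use a sequential-compactness subsequence extraction; your counting paragraph is correct as written, granted the lemma.

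The genuine problem is the point you dismiss as routine: the resolving lemma \emph{with hypothesis ``orbit window in $D$'' for the given expansivity entourage $D$} is not just hard to prove by your limit argument --- it is false in general. Your extraction produces $u\neq v$ with $(f^i(u),f^i(v))\in\textsf{cl}(D)$ for all $i$, and this contradicts nothing, because expansivity of $D$ does not imply expansivity of $\textsf{cl}(D)$ (entourages need not be closed). Concretely, on the full $2$-shift with its unique uniformity, let $v_N$ be the point with $1$'s exactly on $[-2N,2N]$, and let $D=\{(x,y):x_0=y_0\}\cup\{(\bar{0},\sigma^{j}v_N):|j|\leq N\}\cup\{(\sigma^{j}v_N,\bar{0}):|j|\leq N\}$. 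This is an entourage and is an expansivity entourage (any pair $(\bar{0},\sigma^{j}v_N)$ escapes $D$ at a time when its $1$-block sits at distance between $N$ and $2N$ from the origin), yet the pairs $(\bar{0},v_N)$ have $D$-windows of half-length $N$ while staying uniformly far from the diagonal, and their limit $(\bar{0},\bar{1})$ is a pair of distinct fixed points inside $\textsf{cl}(D)$. So your lemma fails for this $D$, even though the theorem's conclusion happens to hold for it. The standard repair --- run the lemma for $D'$ with $\textsf{cl}(D')\subset D$, e.g.\ $D'^{3}\subset D$ --- makes the lemma true, but then your count lands on $\textsf{sep}(n+2N,D',f)$, and monotonicity goes the wrong way: $D'\subset D$ gives $\textsf{sep}(D',f)\geq\textsf{sep}(D,f)$, so you conclude only $h_{\textsf{u}}(f)=\textsf{sep}(D',f)$, leaving the claimed inequality $\textsf{sep}(D,f)\geq h_{\textsf{u}}(f)$ unproven. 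You should be aware that the paper's own proof retreats in exactly this way (it inserts $U^2\subset E\subset E^3\subset D$ so that $\textsf{cl}(E)\subset D$ and proves the identity at the scale $E$; its final jump back to $D$ is likewise unjustified), so you have correctly located the real difficulty --- but no bookkeeping of entourage powers will resolve it, and as proposed your proof establishes the theorem only with $D$ replaced by an entourage whose closure lies in $D$.
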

\begin{proof}
Assume that $K\subset X$ is compact. Choose two entourages $U$ and 
$E$ of $X$ such that $U^2\subset E\subset E^3\subset D$, then $\textsf{cl}(E)\subset D$. We shall show that $h_{\textsf{u}}(f,K,U^2)=h_{\textsf{u}}(f,K,E)$. The inequality $h_{\textsf{u}}(f,K,U^2)\geq h_{\textsf{u}}(f,K,E)$ is immediate. Consider $x,y\in X$, by positively expansiveness there exists $i\in\mathbb{N}$ such that $(f^i(x),f^i(y))\notin D$. Then $(f^i(x),f^i(y))\notin \textsf{cl}(E)$. Since $K\times K\setminus U$ is compact and 
$$K\times K\setminus U\subset\bigcup_{i\in\mathbb{N}}F^{-i}(K\times K\setminus \textsf{cl}(E)),$$
there exists $k\in\mathbb{N}$ such that
$$K\times K\setminus U\subset\bigcup_{i=1}^kF^{-i}(K\times K\setminus \textsf{cl}(E)).$$
Suppose that $A$ is an $(n,U,f)$-separated set for $K$, then $f^{-k}(A)\cap K$ is an $(n_2k,E,f)$-separated set for $K$. Indeed $x,y\in f^{-k}(A)$ implies that $f^k(x),f^k(y)\in A$. Hence there exists $0\leq n-1$ such that $(f^{i+k}(x),f^{i+k}(y))\notin U$. Thus there exists $1\leq j\leq k$ such that $(f^{i+k+j}(x),f^{i+j+k}(y))\in X\times X\setminus \textsf{cl}(E)\subset X\times X\setminus E$. Therefore by Lemma ??? $h_{\textsf{u}}(f,K,U^2)\leq h_{\textsf{u}}(f,K,E)$. This implies that $\textsf{sep}(f,D)\geq h_{\textsf{u}}(f)$.

\end{proof}

\begin{thm}
Let $(X,\mathcal{U})$ be a compact uniform space and let $f:X\rightarrow X$ be a homeomorphism. If $\mathcal{C}(U)$ is a uniform generator for $f$, then $h_{\textsf{top}}(f)=h_{\textsf{u}}(f)=h_{\textsf{uc}}(f)=\textsf{cov}(f,U)$.
\end{thm}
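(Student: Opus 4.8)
The plan is to collapse the four-way equality into a single nontrivial inequality and then convert the generator hypothesis, via Theorem \ref{t1}, into a refinement statement that drives the entropy estimate. Since $(X,\mathcal{U})$ is compact, Fact \ref{f1} already gives $h_{\textsf{top}}(f)=h_{\textsf{u}}(f)=h_{\textsf{uc}}(f)$, so it suffices to prove $h_{\textsf{uc}}(f)=\textsf{cov}(f,U)$, where I read $\textsf{cov}(f,U)=h(f,\mathcal{C}(U))=\lim_{n}\tfrac1n\log\textsf{cov}(n,U,f)$ as the entropy of $f$ relative to the single uniform cover $\mathcal{C}(U)$. One inequality is immediate: on a compact space $h_{\textsf{uc}}(f)=\sup\{\textsf{cov}(f,E):E\in\mathcal{U}\}$ is a supremum over all entourages, so $\textsf{cov}(f,U)\leq h_{\textsf{uc}}(f)$. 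The whole content lies in the reverse inequality $\textsf{cov}(f,E)\leq\textsf{cov}(f,U)$ for an arbitrary entourage $E$.

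To get it, I would fix $E$, write $\alpha=\mathcal{C}(U)$, and use that $\alpha$ is a uniform generator: the refinement conclusion of Theorem \ref{t1} supplies an $n>0$ with $\alpha_n:=\bigvee_{i=-n}^{n}f^{-i}(\alpha)\succcurlyeq\mathcal{C}(E)$. I would record here that Theorem \ref{t1} is stated for sequentially compact spaces, but that the same extraction argument against the at-most-one-point property of the generator goes through on a compact uniform space once nets replace sequences; this is the only place where compactness substitutes for sequential compactness. Then, by monotonicity of cover entropy under refinement—if $\beta\succcurlyeq\gamma$ then $N(\bigvee_{i=0}^{m-1}f^{-i}\gamma)\leq N(\bigvee_{i=0}^{m-1}f^{-i}\beta)$, hence $h(f,\gamma)\leq h(f,\beta)$—one concludes $\textsf{cov}(f,E)=h(f,\mathcal{C}(E))\leq h(f,\alpha_n)$.

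It then remains to show $h(f,\alpha_n)=h(f,\alpha)=\textsf{cov}(f,U)$, which is the technical heart and the step that genuinely uses that $f$ is a homeomorphism. Expanding the join gives $\bigvee_{j=0}^{m-1}f^{-j}(\alpha_n)=\bigvee_{i=-n}^{m-1+n}f^{-i}(\alpha)$; since each power $f^{k}$ is a homeomorphism it carries a cover to a cover bijectively, so $N(f^{k}\beta)=N(\beta)$, and applying a suitable power of $f$ rewrites this join as $\bigvee_{l=0}^{m-1+2n}f^{-l}(\alpha)$ without altering the minimal subcover cardinality. Thus $\tfrac1m\log N(\bigvee_{j=0}^{m-1}f^{-j}\alpha_n)=\tfrac{m+2n}{m}\cdot\tfrac{1}{m+2n}\log N(\bigvee_{l=0}^{(m+2n)-1}f^{-l}\alpha)$, and as $m\to\infty$ the first factor tends to $1$ while the second tends to $h(f,\alpha)$, so $h(f,\alpha_n)=h(f,\alpha)$. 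Chaining the inequalities yields $\textsf{cov}(f,E)\leq\textsf{cov}(f,U)$ for every $E$, hence $h_{\textsf{uc}}(f)\leq\textsf{cov}(f,U)$, and together with the trivial direction all four quantities coincide.

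The main obstacle I anticipate is bookkeeping rather than conceptual: keeping the two-sided index windows and the compensating powers of $f$ straight in the identity $h(f,\alpha_n)=h(f,\alpha)$, plus the mild hypothesis gap when invoking Theorem \ref{t1}. Once the generator is turned, through that theorem, into the assertion that the refinements $\alpha_n$ eventually dominate every uniform cover, the rest is the standard expansive-entropy computation.
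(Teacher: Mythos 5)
Your proposal is correct and takes essentially the same route as the paper's proof: reduce via Fact \ref{f1} to the single equality $h_{\textsf{uc}}(f)=\textsf{cov}(f,U)$, use Theorem \ref{t1} to show the two-sided joins $\bigvee_{i=-n}^{n}f^{-i}(\mathcal{C}(U))$ refine every uniform cover $\mathcal{C}(E)$, and then exploit that $f$ is a homeomorphism to shift indices and identify $h\bigl(f,\bigvee_{i=-n}^{n}f^{-i}(\mathcal{C}(U))\bigr)$ with $\lim_m \frac{1}{m}\log\textsf{cov}(m+2n,U,f)=\textsf{cov}(f,U)$. The only deviations are cosmetic and in your favor: you apply Theorem \ref{t1} directly to the entourage $E$ where the paper takes a redundant detour through Lemma \ref{lem1}, and you correctly flag the hypothesis mismatch (Theorem \ref{t1} is stated for sequentially compact spaces while this theorem assumes compactness), which the paper invokes without comment.
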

\begin{proof}
By the Fact \ref{f1} it is enough to show that $h_{\textsf{uc}}(f)=h_{\textsf{uc}}(f,\alpha)$. Let $V$ be any entourage of $X$ and $D$ be the entourage as Lemma \ref{lem1} for the uniform cover $\mathcal{C}(V)$. By Theorem \ref{t1} there exists $n\in\mathbb{N}$ such that for each $A\in \bigvee_{i=-n}^{n}f^{-i}(\mathcal{C}(U))$ there exists $x\in A$ with $A\subset D[x]$. Thus $\mathcal{C}(V)\preccurlyeq\bigvee_{i=-n}^{n}f^{-i}(\mathcal{C}(U)$. Hence we obtain
\begin{align*}
\textsf{cov}(f,V)
&=\lim_{k\rightarrow\infty}\frac{1}{k}\log N(\bigvee_{j=0}^{k-1}f^{-j}(\bigvee_{i=-n}^{n}f^{-i}(\mathcal{C}(U)))\\
&=\lim_{k\rightarrow\infty}\frac{1}{k}\log N(\bigvee_{j=-n}^{n+k-1}f^{-j}(\mathcal{C}(U))\\
&=\lim_{k\rightarrow\infty}\frac{1}{k}\textsf{cov}(2n+k-1,U,f)\\
&=\textsf{cov}(f,U).
\end{align*}
Since $\textsf{cov}(f,V)\leq \textsf{cov}(f,U)$ for all entourages $V$ of $X$, we conclude that 
$$h_{\textsf{uc}}(f)=\sup\{\textsf{cov}(f,V):~V\in\mathcal{U}\}=\textsf{cov}(f,U)$$.
\end{proof}
\begin{prop}
Let $(X,\mathcal{U})$ be a compact uniform space and let $f:X\rightarrow X$ be a continuous map. Then
\begin{enumerate}
\item
$\textsf{Ent}(X,f)$ is closed;
\item
$\textsf{Ent}(X,f)$ is forward invariant
\item
$\phi(\textsf{Ent}(X,f))=\textsf{Ent}(Y,g)$ 
\end{enumerate}
\end{prop}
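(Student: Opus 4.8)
The plan is to argue from the definition of the entropy--point set: writing $h^{*}(f,W)$ for the localized entropy carried by an open set $W$ (computed, as in the cover description above, from $N(\bigvee_{i=0}^{n-1}f^{-i}(\cdot))$ along orbits meeting $W$), a point $x$ lies in $\textsf{Ent}(X,f)$ exactly when $h^{*}(f,W)>0$ for every neighborhood $W$ of $x$; equivalently, $x\notin\textsf{Ent}(X,f)$ precisely when $x$ has an open neighborhood $W$ with $h^{*}(f,W)=0$. Compactness of $X$ (through Fact \ref{f1}) guarantees that each $h^{*}(f,W)$ is a well-defined finite quantity, and all three assertions reduce to monotonicity of $h^{*}$ in its set argument together with its behaviour under $f$ and under the map $\phi$.

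For (1) I would show that $X\setminus\textsf{Ent}(X,f)$ is open. If $x\notin\textsf{Ent}(X,f)$, fix an open neighborhood $W$ of $x$ with $h^{*}(f,W)=0$. Since $W$ is a neighborhood of each of its own points, the same $W$ witnesses $z\notin\textsf{Ent}(X,f)$ for every $z\in W$; hence $W\subseteq X\setminus\textsf{Ent}(X,f)$, the complement is open, and $\textsf{Ent}(X,f)$ is closed.

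For (2) I would prove $f(\textsf{Ent}(X,f))\subseteq\textsf{Ent}(X,f)$. Let $x\in\textsf{Ent}(X,f)$ and let $V$ be an arbitrary neighborhood of $f(x)$. Continuity makes $f^{-1}(V)$ a neighborhood of $x$, so $h^{*}(f,f^{-1}(V))>0$. The inequality $N(f^{-1}(\alpha))\leq N(\alpha)$ together with the shift identity $\bigvee_{i=0}^{n-1}f^{-i}(f^{-1}(\alpha))=\bigvee_{i=1}^{n}f^{-i}(\alpha)$ yields $h^{*}(f,V)\geq h^{*}(f,f^{-1}(V))>0$, so $f(x)\in\textsf{Ent}(X,f)$.

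For (3), assume $\phi\colon(X,f)\to(Y,g)$ is a factor map, i.e. a continuous surjection with $\phi\circ f=g\circ\phi$ (if $\phi$ is a conjugacy the lifting below is immediate). Equivariance gives $f^{-i}(\phi^{-1}(\beta))=\phi^{-1}(g^{-i}(\beta))$, and surjectivity sets up a correspondence between optimal subcovers over $Y$ and their $\phi$-preimages over $X$, whence $h^{*}(f,\phi^{-1}(V))=h^{*}(g,V)$ for open $V\subseteq Y$. The inclusion $\phi(\textsf{Ent}(X,f))\subseteq\textsf{Ent}(Y,g)$ is then the easy half: for $x\in\textsf{Ent}(X,f)$ and a neighborhood $V$ of $\phi(x)$, the set $\phi^{-1}(V)$ is a neighborhood of $x$, so $h^{*}(g,V)=h^{*}(f,\phi^{-1}(V))>0$, giving $\phi(x)\in\textsf{Ent}(Y,g)$. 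The reverse inclusion is the crux and the main obstacle, since entropy does not increase along factor maps: given $y\in\textsf{Ent}(Y,g)$ I must exhibit a \emph{single} preimage lying in $\textsf{Ent}(X,f)$. I would argue by contradiction. If no point of the fiber $\phi^{-1}(y)$ were an entropy point, each such point would carry a zero-entropy open neighborhood; compactness of $\phi^{-1}(y)$ extracts a finite subfamily covering the fiber, with union $W$. Since $X$ is compact and $Y$ is Hausdorff, $\phi$ is a closed map, so $V:=Y\setminus\phi(X\setminus W)$ is an open neighborhood of $y$ with $\phi^{-1}(V)\subseteq W$. A finite union estimate for $h^{*}$ over the covering pieces then forces $h^{*}(f,\phi^{-1}(V))\leq h^{*}(f,W)=0$, hence $h^{*}(g,V)=0$, contradicting $y\in\textsf{Ent}(Y,g)$. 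The two delicate points are establishing the subadditivity of $h^{*}$ over a finite union of open sets and the closed-map passage $\phi^{-1}(V)\subseteq W$, both of which rest essentially on the compactness of $X$.
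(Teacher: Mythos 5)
First, a point of comparison that isn't your fault: the paper states this proposition with no proof at all (and never even defines $\textsf{Ent}(X,f)$, $\phi$, $Y$, or $g$), so there is no argument of the author's to measure yours against; I can only assess your proposal on its own terms. Parts (1) and (2) are essentially correct. (1) is the standard open-complement argument and is complete. (2) is right in outline, but the two facts you cite do not quite assemble by themselves: pulling back an optimal family covering $V$ gives elements of $\bigvee_{i=1}^{n}f^{-i}\alpha$, and to land back in $\bigvee_{i=0}^{n-1}f^{-i}\alpha$ you must intersect with an optimal subcover of $\alpha$ to control the time-zero coordinate, yielding $N_{f^{-1}(V)}\bigl(\bigvee_{i=0}^{n-1}f^{-i}\alpha\bigr)\leq N(\alpha)\cdot N_{V}\bigl(\bigvee_{i=0}^{n-1}f^{-i}\alpha\bigr)$; the constant $N(\alpha)$ then disappears in the limit. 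That is a one-line repair, not a conceptual gap.

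The genuine gap is in (3), and it sits exactly where you wrote the word ``easy.'' For a general factor map $\phi$ the identity $h^{*}(f,\phi^{-1}(V))=h^{*}(g,V)$ is false: projecting spanning or covering data downstairs only gives $h^{*}(f,\phi^{-1}(V))\geq h^{*}(g,V)$, and the inequality is strict in general because entropy can collapse under factors; your appeal to ``a correspondence between optimal subcovers over $Y$ and their $\phi$-preimages'' produces covers of $\phi^{-1}(V)$ from covers of $V$, which bounds the upstairs count from \emph{above} by nothing and from below by the downstairs count -- the wrong direction for what you need. Consequently the inclusion $\phi(\textsf{Ent}(X,f))\subseteq\textsf{Ent}(Y,g)$ is not merely unproved under your hypotheses; it is false. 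Take $X$ the full shift on two symbols, $Y$ a single fixed point, $\phi$ the constant map: every point of $X$ is an entropy point, so $\phi(\textsf{Ent}(X,f))=Y$, while $\textsf{Ent}(Y,g)=\emptyset$. So statement (3) can only be true under a stronger reading of the undefined $\phi$ -- a conjugacy (in which case both inclusions are immediate and none of your machinery is needed), or at least a factor map that preserves local entropy. Ironically, the half you flag as ``the crux and the main obstacle,'' namely $\textsf{Ent}(Y,g)\subseteq\phi(\textsf{Ent}(X,f))$ via compactness of the fiber, closedness of $\phi$, and the fact that the entropy of a finite union is the maximum of the entropies, is correct as written and is the only half that survives for general factor maps.
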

\begin{prop}
Let $(X,\mathcal{U})$ be a compact uniform space and let $f:X\rightarrow X$ be a continuous map and $K \subseteq X$ a closed subset. If $h_{\textsf{u}}(f,K)>0$,then $K\cap\textsf{Ent}(X,f)\neq\emptyset$.
\end{prop}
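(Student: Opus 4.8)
The plan is to argue by contradiction and reduce the positivity of $h_{\textsf{u}}(f,K)$ to a localization property of uniform entropy over finite unions. Suppose $K\cap\textsf{Ent}(X,f)=\emptyset$. Then every $x\in K$ fails to be an entropy point, so by the definition of $\textsf{Ent}(X,f)$ there is an open neighborhood $V_x$ of $x$ whose closure carries no entropy, that is $h_{\textsf{u}}(f,\textsf{cl}(V_x))=0$. Since $K$ is a closed subset of the compact space $X$ it is compact, so the open cover $\{V_x:x\in K\}$ admits a finite subcover $V_{x_1},\dots,V_{x_m}$ of $K$. Putting $K_j:=K\cap\textsf{cl}(V_{x_j})$ we obtain finitely many compact sets with $K=\bigcup_{j=1}^m K_j$, and by monotonicity of $h_{\textsf{u}}(f,\cdot)$ in the compact argument (any $\alpha_n$-subcover of $\textsf{cl}(V_{x_j})$ already covers $K_j$) we get $h_{\textsf{u}}(f,K_j)\le h_{\textsf{u}}(f,\textsf{cl}(V_{x_j}))=0$ for every $j$.

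The heart of the argument is then the claim that uniform covering entropy is the maximum over a finite union, namely $h_{\textsf{u}}(f,K)=\max_{1\le j\le m}h_{\textsf{u}}(f,K_j)$. To see this I would work at the level of relative covering numbers: for an entourage $U$, write $\alpha_n=\bigvee_{i=0}^{n-1}f^{-i}(\mathcal{C}(U))$ and let $N(\alpha_n\mid S)$ be the least number of elements of $\alpha_n$ needed to cover a set $S$. A union of subcovers of the $K_j$ is a subcover of $K$, while any subcover of $K$ covers each $K_j$, so
\[
\max_{1\le j\le m}N(\alpha_n\mid K_j)\;\le\;N(\alpha_n\mid K)\;\le\;\sum_{j=1}^m N(\alpha_n\mid K_j)\;\le\;m\,\max_{1\le j\le m}N(\alpha_n\mid K_j).
\]
Dividing by $n$, taking logarithms and passing to the limit (which exists by the relevant Fact), the constant factor $m$ contributes $\tfrac{1}{n}\log m\to 0$, and since a maximum of finitely many sequences commutes with the limit we obtain $\textsf{cov}(U,K,f)=\max_j\textsf{cov}(U,K_j,f)$. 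Taking the supremum over all entourages $U$ and interchanging it with the finite maximum yields the claimed formula, whence $h_{\textsf{u}}(f,K)=\max_j h_{\textsf{u}}(f,K_j)=0$, contradicting the hypothesis $h_{\textsf{u}}(f,K)>0$. Therefore $K\cap\textsf{Ent}(X,f)\neq\emptyset$.

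I expect the main obstacle to be pinning down the localized entropy $h_{\textsf{u}}(f,S)$ for an arbitrary compact $S$ precisely enough that the two routine-looking ingredients above — monotonicity $S\subseteq S'\Rightarrow h_{\textsf{u}}(f,S)\le h_{\textsf{u}}(f,S')$ and the finite-union identity — are genuinely justified, rather than the contradiction mechanism itself. In particular one must check that the limit defining $\textsf{cov}(U,S,f)$ still exists for the relative covering numbers (subadditivity of $n\mapsto\log N(\alpha_n\mid S)$ under the join) and that interchanging the supremum over entourages with the finite maximum is legitimate; the latter is immediate since for finitely many indices $\sup_U\max_j\ge\max_j\sup_U$ holds trivially while the reverse inequality follows termwise. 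A secondary point is to make sure the working definition of $\textsf{Ent}(X,f)$ is exactly that $x\notin\textsf{Ent}(X,f)$ if and only if $x$ has a neighborhood of zero entropy, so that the finite subcover extracted from compactness really does decompose $K$ into zero-entropy pieces.
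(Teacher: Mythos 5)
The paper offers no proof of this proposition at all: it is stated bare, and indeed the paper never even records a definition of $\textsf{Ent}(X,f)$ or of the relative entropy $h_{\textsf{u}}(f,K)$ for a subset. So there is no argument of the author's to compare yours against; your proof fills a genuine gap. Judged on its own, it is the standard argument for ``positive entropy on a compact set forces an entropy point'': reduce by compactness to finitely many closed neighborhoods of zero entropy, then invoke the finite-union property of entropy on compact subsets. The skeleton is sound: the two-sided estimate $\max_{j}N(\alpha_n\mid K_j)\le N(\alpha_n\mid K)\le m\,\max_{j}N(\alpha_n\mid K_j)$ is correct, the interchange of $\sup_U$ with a finite maximum is legitimate exactly as you argue, and your working definition of $\textsf{Ent}(X,f)$ (that $x\notin\textsf{Ent}(X,f)$ iff some closed neighborhood of $x$ carries zero entropy) is the standard one and is consistent with how the paper uses entropy points in its final theorem.

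One step you flagged as needing verification does in fact fail as stated: the limit $\lim_{n\rightarrow\infty}\frac{1}{n}\log N(\alpha_n\mid S)$ need not exist when $S$ is not invariant. The Fact you appeal to is Fekete subadditivity for covers of the whole space; for a subset the join argument only yields $N(\alpha_{n+m}\mid S)\le N(\alpha_n\mid S)\,N(\alpha_m\mid f^n(S))$, and the second factor involves $f^n(S)$ rather than $S$, so subadditivity of $n\mapsto\log N(\alpha_n\mid S)$ is lost. The repair is harmless and standard: define $\textsf{cov}(U,S,f)$ with $\limsup$, exactly as the paper itself does for $\textsf{sep}(U,f)$ and $\textsf{span}(U,f)$. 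Since $\limsup_{n}\max_{j}a_{j,n}=\max_{j}\limsup_{n}a_{j,n}$ for finitely many sequences, and $\frac{1}{n}\log m\rightarrow 0$, every inequality in your chain survives verbatim and the contradiction goes through. With that single adjustment your proof is complete and correct.
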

\begin{thm}
Let $(X,\mathcal{U})$ be a compact uniform space and let $f:X\rightarrow X$ be a continuous map. Then
$h_{\textsf{u}}(\textsf{Ent}(X,f),f)=h_{\textsf{u}}(f)$
\end{thm}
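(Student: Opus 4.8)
The plan is to prove the two inequalities separately, writing $W=\textsf{Ent}(X,f)$; recall from the preceding results that $W$ is closed, hence compact, and forward invariant, so that $f$ restricts to a continuous self-map of $W$ and $h_{\textsf{u}}(f,W)$ is meaningful. The inequality $h_{\textsf{u}}(f,W)\le h_{\textsf{u}}(f)$ is immediate from monotonicity of $K\mapsto h_{\textsf{u}}(f,K)$ under inclusion, since $W\subseteq X$; the content of the theorem is the reverse inequality $h_{\textsf{u}}(f,W)\ge h_{\textsf{u}}(f)$.

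For the reverse inequality I would first localise the entropy to an arbitrarily small neighbourhood of $W$. Fix a symmetric entourage $V$ and consider the closed set $C_V=X\setminus V[W]$. Since $(x,x)\in V$ for every $x$, we have $W\subseteq V[W]$, so $C_V\cap W=\emptyset$. By the preceding Proposition (in contrapositive form) a closed set disjoint from $W$ has zero entropy, hence $h_{\textsf{u}}(f,C_V)=0$; concretely this comes from covering the compact set $C_V$ by finitely many zero-entropy neighbourhoods $N_{x_1},\dots,N_{x_k}$ furnished by the fact that no $x\in C_V$ is an entropy point, and using that $h_{\textsf{u}}(f,\bigcup_j N_{x_j})=\max_j h_{\textsf{u}}(f,N_{x_j})=0$, a property elementary from the separated-set definition of $h_{\textsf{u}}$. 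Writing $X=\overline{V[W]}\cup C_V$ as a union of two closed sets and using the same maximum-over-a-finite-union property, I obtain $h_{\textsf{u}}(f)=\max\bigl(h_{\textsf{u}}(f,\overline{V[W]}),\,0\bigr)=h_{\textsf{u}}(f,\overline{V[W]})$ for every entourage $V$.

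It then remains to pass to the limit along the decreasing family of closed sets $\overline{V[W]}$, whose intersection over a base of entourages is exactly $\overline{W}=W$, and to conclude $h_{\textsf{u}}(f,W)=\inf_V h_{\textsf{u}}(f,\overline{V[W]})=h_{\textsf{u}}(f)$. At a fixed resolution $E$ and fixed time $n$ this step is harmless: using compactness one can take near-optimal $(n,E,f)$-separated subsets of $\overline{V[W]}$, extract a convergent configuration as $V\to\Delta_X$, and observe that the limit points lie in $W$ and remain $E$-separated, so that the separation numbers at fixed $(n,E)$ converge to those computed inside $W$. The main obstacle is precisely the exchange of this limit with the exponential growth limit $n\to\infty$ and the supremum over resolutions $E$: topological entropy is not continuous from above along arbitrary decreasing sequences of closed sets, so the heart of the argument is to show that in the present situation the complexity surviving in every neighbourhood $\overline{V[W]}$ of $W$ is already carried, at the level of growth rates, by $W$ itself. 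I would attempt this by a diagonal selection producing, for growth rates approaching $h_{\textsf{u}}(f)$, separated sets whose points accumulate on $W$, invoking the compactness (and, where needed, sequential compactness) of $X$ together with the forward invariance of $W$ to keep the limiting orbits inside $W$.
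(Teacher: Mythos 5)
The paper itself offers no proof of this theorem --- it is stated bare, as are the two propositions about $\textsf{Ent}(X,f)$ that you invoke --- so your attempt can only be judged on its own merits, not compared. Its first half is correct and is a genuine reduction: taking $V$ to be an open entourage so that $C_V=X\setminus V[W]$ is closed and disjoint from $W=\textsf{Ent}(X,f)$, the contrapositive of the preceding proposition gives $h_{\textsf{u}}(f,C_V)=0$, and the elementary estimate $\textsf{sep}(n,E,K_1\cup K_2)\le \textsf{sep}(n,E,K_1)+\textsf{sep}(n,E,K_2)$ yields the finite-union/maximum formula and hence $h_{\textsf{u}}(f)=h_{\textsf{u}}(f,\textsf{cl}(V[W]))$ for every such $V$.

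The genuine gap is the final passage from the neighborhoods $\textsf{cl}(V[W])$ to $W$ itself, and the diagonal selection you sketch cannot close it. The fixed-$(n,E)$ compactness argument does give $\textsf{sep}(n,E',W)\ge\inf_V\textsf{sep}(n,E,\textsf{cl}(V[W]))$ for a slightly shrunken entourage $E'$, and hence a lower bound for $h_{\textsf{u}}(f,W)$ by $\sup_{E}\limsup_{n}\frac{1}{n}\log\inf_V\textsf{sep}(n,E,\textsf{cl}(V[W]))$; but what you need is $\inf_V\sup_E\limsup_n\frac{1}{n}\log \textsf{sep}(n,E,\textsf{cl}(V[W]))$, and the only available comparison between ``$\limsup_n$ of $\inf_V$'' and ``$\inf_V$ of $\limsup_n$'' goes the wrong way: the time $n$ at which the complexity of $\textsf{cl}(V[W])$ becomes visible can blow up as $V$ shrinks, and no diagonal choice repairs this. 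Indeed, your limiting step uses nothing about $W$ beyond closedness, and the general statement ``$\inf_V h_{\textsf{u}}(f,\textsf{cl}(V[W']))=h_{\textsf{u}}(f,W')$ for closed $W'$'' is false: take $X$ to consist of a fixed point $p$ together with invariant horseshoes $\Lambda_m$ of entropy $\ge c>0$ accumulating exactly on $p$, and $W'=\{p\}$; every $\textsf{cl}(V[W'])$ contains all but finitely many $\Lambda_m$ and so has entropy $\ge c$, while $h_{\textsf{u}}(f,\{p\})=0$. (This does not contradict the theorem, since there $\textsf{Ent}(X,f)=X$.) So a correct proof must inject the defining property of entropy points precisely into this last step; in the compact metric case this is done (Ye--Zhang) with measure-theoretic tools --- the variational principle together with Katok-type entropy estimates, showing that the support of an ergodic measure of nearly maximal entropy consists of entropy points and already carries that entropy --- machinery your outline does not supply and which is itself nontrivial to transplant to the non-metrizable uniform setting of this paper.
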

we say that a uniform space $(X,\mathcal{U})$ is uniformly locally compact if there is an entourage $U$ of $X$, such that for each $x\in X$, $U[x]$ is compact\cite{RANDTKE1970420}.\\
A uniformity on the set X is separated if the diagonal $\Delta$ coincides with the intersection $\cap$ of the entourages. The uniformity is separated if and only if the uniform topology has the Hausdorff property.
Therefore in a locally compact separated uniform space for every point $x$ in $X$ and every entourage $U$ of $X$ there exists an entourage $C$ such that $C[x]$ is compact and $C[x]\subseteq U[x]$.
\begin{thm}
Let $(X,\mathcal{U})$ be a locally compact separated uniform space and let $f:X\rightarrow X$ be a continuous map with shadowing property. Let $Y\subset X$ be an $f$-invariant closed set. Let $g=f|_Y$ and $G=g\times g$. If there is $x\in \textsf{sen}(g)$ and $(x,x)\in\textsf{int}(\textsf{cl}(\textsf{R}(G)))$, then $x\in \textsf{Ent}(X,f)$ and so $h_{\textsf{u}}(f)>0$.
\end{thm}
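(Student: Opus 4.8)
The plan is to prove that $x$ is an entropy point in the strong sense that \emph{every} compact neighbourhood $K$ of $x$ satisfies $h_{\textsf{u}}(f,K)>0$; the remark preceding the statement guarantees such compact neighbourhoods $K=C[x]$ exist, and the conclusion $h_{\textsf{u}}(f)>0$ then follows at once from $h_{\textsf{u}}(f)\ge h_{\textsf{u}}(f,K)>0$. The construction couples three ingredients: sensitivity at $x$ yields a fixed entourage $c$ along which points near $x$ eventually separate; the hypothesis $(x,x)\in\textsf{int}(\textsf{cl}(\textsf{R}(G)))$ yields chain recurrent pairs of $G=g\times g$ arbitrarily close to $(x,x)$; and shadowing turns pseudo-orbits into genuine orbits. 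The aim is to encode $\{0,1\}^n$ into $2^n$ mutually separated orbits starting inside $K$.

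First I would fix $K=C[x]$ and a tower of symmetric entourages $c\supset c'\supset c''\supset e$ with $e^{3}\subset c''$, and let $\beta$ be a shadowing entourage for the tracking entourage $e$. Using $x\in\textsf{sen}(g)$ with sensitivity entourage $c$, I pick a witness $y$ near $x$ and an index $N\ge 0$ with $(g^{N}x,g^{N}y)\notin c$. Since $(x,x)$ is interior to $\textsf{cl}(\textsf{R}(G))$, the pair $(x,y)$ also lies in $\textsf{cl}(\textsf{R}(G))$ once $y$ is close enough, so I may choose $(p,q)\in\textsf{R}(G)$ with $p$ near $x$ and $q$ near $y$. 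Shrinking everything and invoking uniform continuity of $g,\dots,g^{N}$, I keep $p,q\in K$ and preserve the separation $(g^{N}p,g^{N}q)\notin c'$.

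The heart of the argument is the two-symbol coding, and here the product map $G$ is essential. Chain recurrence of $(p,q)$ for $G$ gives, for the chosen $\beta$, a single $\beta$-chain from $(p,q)$ to $(p,q)$ whose two coordinates are equal-length $\beta$-loops $w_0=p,\dots,w_L=p$ and $z_0=q,\dots,z_L=q$ with $L>N$; this simultaneity of lengths is precisely what $G$ buys. From them I build two loops at $p$ of common length $L$: word $A:=(p=w_0,\dots,w_L=p)$ and word $B:=(p,z_1,\dots,z_{L-1},p)$, the two altered junctions of $B$ being controlled because $p$ and $q$ are close. A $\beta$-chain tracking estimate (uniform continuity of $g^{i}$, $i\le N$) forces $w_N\approx g^{N}p$ and $z_N\approx g^{N}q$, so $A$ and $B$ disagree by at least $c''$ at position $N$. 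Since both words start and end at $p$, any concatenation $W_s$ of $n$ of them ($s\in\{0,1\}^n$) is again a $\beta$-pseudo-orbit based at $p$, so shadowing supplies $u_s\in X$ whose forward orbit $e$-shadows $W_s$, with $u_s\in K$ because $u_s\approx p\approx x$.

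Finally I would read off the entropy. If $s\ne t$ and $j$ is the first coordinate where they differ, then $W_s$ and $W_t$ are $c''$-apart at time $m=(j-1)L+N<nL$, and since each is $e$-shadowed with $e^{3}\subset c''$ the orbits of $u_s,u_t$ are $e$-apart there; thus $\{u_s:s\in\{0,1\}^n\}\subset K$ is $(nL,e,f)$-separated of cardinality $2^n$, whence $\textsf{sep}(nL,e,f)\ge 2^n$ and $\textsf{sep}(e,f)\ge\frac{\log 2}{L}>0$, giving $h_{\textsf{u}}(f,K)>0$. As $K$ was an arbitrary compact neighbourhood of $x$, this yields $x\in\textsf{Ent}(X,f)$ and $h_{\textsf{u}}(f)>0$. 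The main obstacle I expect is the bookkeeping needed to keep a single separation surviving two independent approximations at once, namely the drift of $\beta$-chains from true orbits and the shadowing error; this is why the tower $c\supset c'\supset c''\supset e$ and the equal-length coordinate chains from $G$ must be arranged before any points are selected. A secondary subtlety, absent in the compact case, is ensuring in this merely locally compact space that every shadowing point $u_s$ indeed lands in the prescribed compact $K$, which is handled by taking $p,q$ sufficiently close to $x$.
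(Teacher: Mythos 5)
Your proposal is correct, and its combinatorial skeleton is exactly the paper's: two loops of a common length $L$, based at nearby points and separating at a definite time inside the block, concatenated in all $2^n$ possible ways to produce pseudo-orbits, then shadowed to obtain $2^n$ points of the compact neighbourhood that are $(nL,e,f)$-separated, whence $h_{\textsf{u}}(f,K)\geq \log 2/L>0$. Where you genuinely diverge is in how the loops are manufactured. The paper reads $(y,z)\in\textsf{R}(G)$ as \emph{genuine} recurrence: it extracts $l>k$ with $(g^l(y),y),(g^l(z),z)\in\widehat{D}$, so its blocks $\xi=\{g^i(y)\}_{i=0}^{l-1}$ and $\eta=\{g^i(z)\}_{i=0}^{l-1}$ are true orbit segments; the separation $G^k(y,z)\notin E^3$ then holds exactly at position $k$ of the block, no tracking estimate is needed, and the only pseudo-orbit jumps occur at the junctions, controlled by $\widehat{D}^2\subset D$. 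You instead treat $\textsf{R}(G)$ as the chain recurrent set, so your blocks are themselves $\beta$-chains; this costs you two devices the paper never uses, namely the finite-step tracking lemma (to transport the separation $(g^{N}p,g^{N}q)\notin c'$ to the chain entries $(w_N,z_N)$) and the repair of the two junctions of the word $B$. In exchange, your argument is the one that survives the weaker (and, given the paper's title, probably intended) reading of $\textsf{R}$ as chain recurrence, under which the paper's extraction of genuine return times from $(y,z)\in\textsf{R}(G)$ would be unjustified. You also fill a real gap that the paper glosses over: the paper asserts that an \emph{arbitrary} pair $(y,z)\in W\cap\textsf{R}(G)$ separates under iteration of $G$, which does not follow from sensitivity at $x$ alone; your route --- sensitivity produces a separating pair $(x,y)$, interiority of $\textsf{cl}(\textsf{R}(G))$ at $(x,x)$ lets you approximate it by $(p,q)\in\textsf{R}(G)$, and continuity of $g,\dots,g^{N}$ preserves the separation --- is the correct justification of that step, at the price of the extra bookkeeping you already acknowledge.
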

\begin{proof}
Since $x\in \textsf{sen}(g)$, there exists $V\in\mathcal{V}$ such that $x\in \textsf{sen}_{V}(g)$. Let  $U$ be any entourage of $X$. Then there exists an entourage $C$ such that $C[x]$ is a compact subset of $X$ and $C[x]\subseteq U[x]$. Choose an entourage $E$ such that $E^3\subset V\cap C$. By shadowing property there exists an entourage $D\subset E$ such that every $D$-pseudo-orbit can be $E$-shadowed by some point in $X$. Since $x\in \textsf{int}(\textsf{cl}(\textsf{R}(G)))$, there exists an entourage $W$ with $W^2\subset D$ such that $W\cap Y\times Y\subset\textsf{int}(\textsf{cl}(\textsf{R}(G)))$. Let $(y,z)\in W\cap\textsf{R}(G)$, then there exists $k\in \mathbb{N}$ such that $G^k(y,z)\notin E^3$. By choosing an entourage $\widehat{D}$ with $\widehat{D}^2\subset D$, since $(y,z)\in\textsf{R}(G)$, there exists $l>k$ such that $(g^l(y),y),(g^l(z),z)\in \widehat{D}$. We shall show that $\textsf{sep}(nl,H,\textsf{cl}(C[x]))\geq 2^n$ for every entourage $H\subset E$ and every $n\in\mathbb{N}$.Let $\xi=\{f^i(y)\}_{i=0}^{l-1}$ and $\eta=\{f^i(z)\}_{i=0}^{l-1}$. Since $(y,z),(y,f^l(y)),(z,f^l(z))\in\widehat{D}$, we conclude that $(z,f^l(y)),(y,f^l(z))\in D$. Hence any sequence $\sigma\{\xi,\eta\}^n$ is an $D$-pseudo-orbit. Therefore there exists a point $w_{\sigma}\in X$ which $E$-shadows the sequence $\sigma$. Assume that the starting point of $\sigma$ is $y$, then $(x,w_{\sigma})\in D\circ E\subset E^2\subset C$. Thus $w_{\sigma}\in C[x]$ (we obtain the same result if the starting point of $\sigma$ is $z$).\\
\textit{Claim: }If $\alpha,~\beta\in\{\xi,\eta\}^n$ are distinct, then there exists $0\leq i\leq nl-1$ such that $f^i(w_{\alpha}),f^i(w_{\beta})\notin E$.\\
\textit{Proof of claim:} Since $\alpha$ and $\beta$ are distinct, they differ in some block, say $(j+1)$-th block. Assume that the $(j+1)$-th block of $\alpha$ is $\xi$ and the $(j+1)$-th block of $\beta$ is $\eta$. Since $(f^k(y),f^k(z))\notin E^3$, there exists $i\in\{n_j,n_j+1,\dots,n_j+l-1\}$ such that $(\alpha_i,\beta_i)\notin E^3$. Suppose by contradiction that $f^i(w_{\alpha}),f^i(w_{\beta})\in E$. Since $f^i(w_{\alpha}),\alpha_i)\notin E$ and $f^i(w_{\beta}),\beta_i)\in E$, we obtain $(\alpha_i,\beta_i)\in E^3$ which is a contradiction. This complete the proof of the claim.\\
Therefore the set $A=\{w_{\sigma}:~\sigma\in\{\xi,\eta\}^n\}$ is an $(nl,E,\textsf{cl}(C[x]))$-separated set. Thus the set $A$ is an $(nl,H,\textsf{cl}(C[x]))$-separated set for all entourages $H\subset E$. Therefore $\textsf{sep}(nl,H,\textsf{cl}(C[x]))\geq \textsf{card}(A)=2^n$. This implies that $$h_{\textsf{top}}(f,\textsf{cl}(C[x]))=h_{\textsf{u}}(f,\textsf{cl}(C[x]))\geq \log 2/l$$. That is $x\in \textsf{Ent}(f,X)$.
\end{proof}




%



\begin{thebibliography}{99}
\setlength{\baselineskip}{.45cm}




\bibitem{1}
\textit{ R. L. Adler and A. G. Konheim and  M. H. McAndrew,} Topological entropy, {Trans. Amer. Math. Soc.}, {\bf114}(1965) 309-319.

\bibitem{2}
\textit{R. Bowen,}  Equilibrium States and the Ergodic Theory of Anosov
Diffeomorphisms, Lecture Notes in Math. 470, Springer, Berlin, 1975.


\bibitem{3}
\textit{R. Bowen and D. Ruelle,}
The ergodic theory of Axiom A flows, {Invent. Math.},  {\bf29}(1975), 181-202.
\bibitem{4}
{D. Burago and Y. Burago and S. Ivanov,}
A course in metric geometry,
AMS, 2001.


\bibitem{5}
\textit{T. Choi and J. Kim,} Decomposition theorem on G -spaces, {Osaka J. Math.}, {\bf46}(2009) 87-104.

\bibitem{6}
 \textit{M. J. Field,} Dynamics and Symmetry, ICP Adv. Texts in Math. 3, Imperial College, London 2007.



\bibitem{7}
\textit{O. V. Kirillova}, Entropy concepts and DNA investigations, {Phys. Letters A}, {\bf274}( 2000) 247-253.

\bibitem{8}
\textit{M. Malziri and  M. R. Molaei}, An extension of the notion of topological entropy,
{Chaos, Solitons and Fractals}, {\bf36}(2008) 370-373.
%



\end{thebibliography}
\end{document}